\newcommand{\diam}{\textrm{diam}}
\newcommand{\var}{\textrm{var}}
\newtheorem{theorem}{Theorem}
\newtheorem{corollary}{Corollary}
\newtheorem{prop}{Proposition}
\newtheorem{thmx}{Theorem}
\newtheorem{lemma}{Lemma}
\newtheorem{remark}{Remark}
\newtheorem*{thm}{Theorem {\bf 1$'$}}
\title{Multifractal analysis  in non-uniformly
hyperbolic interval maps}
\author{Ma Guanzhong, Shen Wenqiang$^*$, Yao Xiao}
\address{$^1$ School of Mathematics and Statistics, Anyang Normal University, Anyang, 455000, P. R. China.
\vskip 2pt $^2$ School of mathematics and statistics, Northwestern Polytechnical University, Xi'an, 710000, P. R. China.
\vskip 2pt $^3$ School of Mathematical sciences and LPMC, Nankai University, Tianjin, 300071, P.R. China.
\vskip 2pt \hspace{1.5mm} Email:{maguanzhong75@aynu.edu.cn\\ wqshen@nwpu.edu.cn\\ yaoxiao@nankai.edu.cn}}
\thanks{{$^*$ Corresponding author}}
\begin{document}
\maketitle
\begin{abstract}
In this paper, we study the Hausdorff dimension of the generalized intrinsic level set with respect to the given ergodic meausre in a class of
non-uniformly hyperbolic interval maps with finitely many branches.
\end{abstract}
\smallskip

\thanks{{ 2010 Mathematics Subject Classification.} 37B40; 28A80.}

\thanks{{ Keywords:  Multifractal analysis; Hausdorff dimension; Moran set}
\section{Introduction}

Let $T:\bigcup\limits_{i=1}^m I_i\subset [0, 1]\rightarrow [0,1]$ be a piecewise $C^{1}$ map, where $I_i$ is the closed interval for  $1\leq i\leq m$ such that   $int(I_{i})\cap int(I_{j})=\emptyset$ for any distinct $i$ and $j$. Here, $int(I_i)$ means the interior of $I_i$.  In this paper, we consider the following class of  non-uniformly hyperbolic interval maps,
\begin{itemize}
\item $T|_{ I_i}:I_i\rightarrow [0,1]$ is a surjective and  continuously differentiable for $1\leq i\leq m.$ There is a unique $x_i\in I_i$ such that $T(x_i)=x_i$ for each $i$.
\item   $|T'(x)|>1$ for $x\not\in\{x_1,\ldots, x_m\}$. Here, we also allow that $|T'(x_i)|>1$ for some $i\in \{1, 2, \ldots, m\}$.
\end{itemize}
If $T'(x_i)=1$ or $T'(x_i)=-1$ for some $i$, we say that  $x_i$ is a {\it parabolic} fixed point.   The  class of non-uniform hyperbolic maps includes the important  example of Manneville-Pomeau map \cite{PW1999}, $T: [0,1]\rightarrow [0,1]$ defined by $Tx= x+x^{1+\beta}$ (mod 1), where $0<\beta<1$, see Fig 1.

\begin{figure}
\begin{center}
\begin{tikzpicture}[scale=6]
    \node[left]at(0,1){$1$};
    \node[below]at(1,0){$1$};
    \node[below left]at(0,0){$0$};

    \draw (0,0)--(1,0)--(1,1)--(0,1)--cycle;
    \draw[domain=0:0.6180339887498949,color=red] plot (\x,{\x+(\x)^2}) ;
    \draw[domain=0.6180339887498949:1,color=blue] plot (\x,{\x+(\x)^2-1}) ;
    \draw[densely dashed](0.6180339887498949,0)--(0.6180339887498949,1);
\end{tikzpicture}
\caption{Maneville map}
\end{center}
\end{figure}
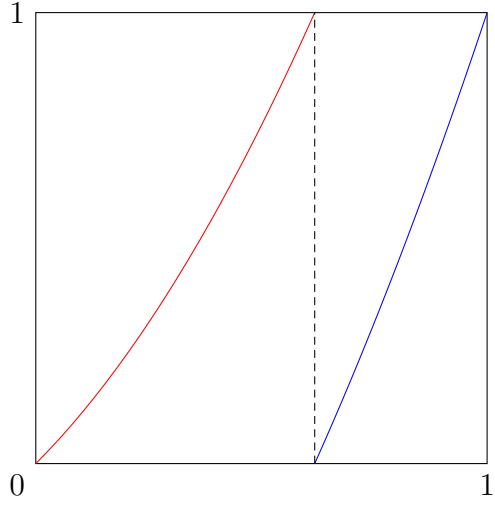

We define the repeller $\Lambda$ of $T$ by
$$
  \Lambda:=\left\{x\in \bigcup_{i=1}^m I_i : T^{n}(x)\in [0,1], \forall n\geq0\right\}.
$$
 We know that $(\Lambda, T)$ has a very natural Markov partition.  Let  $S_{i}$ be the inverse branch of $T|_{I_{i}}: I_{i}\rightarrow [0,1]$ for $i=1,\ldots,m$.
Let $\mathcal{A}=\{1\dots, m\}$, $\Sigma=\mathcal{A}^{\mathbb{N}}$ and $\Sigma_n$ be the set of all $n$-blocks over $\mathcal{A}$ for any $n\in \mathbb{N}$. Here we point out $\mathbb{N}$ is the set of positive integers in this paper. Let $\sigma:\Sigma\to\Sigma$ be the shift map,
   $$\sigma((\omega_n)_{n\ge 0})=(\omega_n)_{n\ge 1}.$$ Define the semi-conjugacy map $\pi: \Sigma \to [0,1]$ by
\[
\pi(\omega):=\lim_{n\rightarrow\infty}S_{\omega_0}\circ S_{\omega_2}\circ \dots \circ S_{\omega_{n-1}}([0,1]).
\]
It is easy to check that $\pi(\Sigma)=\Lambda$ and $ \pi\circ\sigma(\omega)=T\circ\pi(\omega).
$ We remarked that $\pi$ is a bijection map from  $\Sigma$ to $\Lambda$ except for a countable many points.

Let $\phi$ be a continuous function in  $C(\Lambda, \mathbb{R}^{d})$. We denote the $n$-th Birkhoff sum by
$S_n\phi(x)=\sum_{j=0}^{n-1}\phi(T^{j}x)$ and $n$-th Birkhoff average by
$A_n \phi(x)=\frac{1}{n} S_n\phi(x)$ for any $x\in \Lambda$.  For any $\alpha\in \mathbb{R}^{d}$, we define
\begin{equation*}
\Lambda_{\phi}(\alpha):=\{x \in\Lambda: \lim_{n\rightarrow\infty} A_n\phi(x)=\alpha\}
\end{equation*}
and more generally,
\begin{equation*}
\tilde{\Lambda}_{\phi}(\alpha):=\{x\in\Lambda: \lim_{k\rightarrow\infty} A_{n_k}\phi(x)=\alpha  \ \text{for some} \  \{n_k\}_{k=1}^{\infty} \ \text{with} \lim_{k\rightarrow\infty}n_k=\infty\}.
\end{equation*}

Since $\Lambda_{\phi}(\alpha)$ not only depends on $\alpha$, but also on the continuous  function $\phi$, it is natural to introduce a set which is intrinsic in some sense.

We denote the set of all invariant measures by $\mathcal{M}(\Lambda, T)$ and the set of all ergodic measures by $\mathcal{E}(\Lambda, T)$.
For any $\mu\in \mathcal{E}(\Lambda, T)$, we define the intrinsic level set
\begin{equation*}
\Lambda_{\mu}:=\left\{x\in  \Lambda: (A_n)_{*}\delta_x=\frac{1}{n}\sum_{i=0}^{n-1}\delta_{T^i(x)}
  \xrightarrow[n\rightarrow\infty]{*} \mu \right\}
\end{equation*}
be the set of all  $\mu$ generic points, where $\xrightarrow[n\rightarrow\infty]{*}$ stands for the convergence in the weak-$*$ topology. Similarly, we also define the generalized intrinsic level set
\begin{equation*}
\tilde{\Lambda}_{\mu}:=\left\{x\in  \Lambda: (A_{n_k})_{*}\delta_x
  \xrightarrow[k\rightarrow\infty]{*} \mu \ \text{for some} \  \{n_k\}_{k=1}^{\infty} \ \text{with} \lim_{k\rightarrow\infty}n_k=\infty\right\}.
\end{equation*}

The  multifractal analysis for uniformly hyperbolic conformal dynamical system is well developed in past years,  we refer the \cite{PW1997A, GP1997, FLW2002, Olsen-2003, Olsen-2007} for  entropy spectrum and Birkhoff spectrum of level set and  dimension spectrum of Gibbs measure or weak Gibbs measure. The original method developed in \cite{PW1997A} is based on the equilibrium state (Gibbs measure) in thermal-formalism method, which needs the further regularity conditions on  $T$. Later, this method was further developed in \cite{FLW2002, Olsen-2003, Olsen-2007, Climenhaga}, and the  regularity conditions have been completely removed in the uniformly hyperbolic case.

Recently, there has been a trend  in  understanding the multifractal analysis beyond the uniformly hyperbolic dynamical system. However, up to now,  there is still not a complete picture in the direction of non-uniformly hyperbolic dynamical system. The topological entropy of these types of sets have been studied in \cite{PS2005, PS2007,  Tian-Varandas,  Climenhaga-2013,  zheng-chen-zhou, Takens-Verbitskiy}. The dimension spectrum of Birkhoff
ergodic limits in non-uniform hyperbolic dynamic systems has been done in \cite{JJOP2010, BPM2014, GR20092, Chung2010, Climenhaga, Lommi-Todd}.

In \cite{GR20091},  Gelfert and Rams  studied the dimension spectrum of $\Lambda_{\phi}$ for $\phi=\log |T'|$ by. For the general continuous function $\phi$,  Johansson, Jordan,  \"{O}berg, and Pollicott established a formula of $\dim_{H}\Lambda_{\phi}$ in \cite{JJOP2010}.

For any ergodic measure $\mu$, we call $\mu$ is hyperbolic if the Lyapunov exponent of $\mu$
$$\lambda(\mu, T):=\int \log |T'|d\mu>0,$$
 otherwise  $\mu$ is called to be parabolic.  If $\mu$ is a parabolic  measure, $\mu$ only supports on some parabolic fixed point under the setting of this paper.

Given a compact and $T$-invariant  set $K\subset \Lambda$, we say that $K$ is a hyperbolic set, if $|T'(z)|>1$ for any $z\in K$.  The hyperbolic dimension of $\Lambda$ is defined by $$\dim_{H}^{hyp}\Lambda:=\sup\{\dim_{H}K: K  \ \text{is a hyperbolic set in} \ \Lambda\}.$$ One of the main goal in the study of non-uniformly hyperbolic dynamical system is to recover the sufficient hyperbolicity to dominate or balance the non-hyperbolic behaviour, we refer the reader to \cite{Dobbs-2006, Dobbs-2015}.

Now, we will state our main result of the Hausdorff dimension of $$\Lambda_\mu=\left\{x\in  \Lambda: \frac{1}{n}\sum_{i=0}^{n-1}\delta_{T^i(x)}  \xrightarrow{*}\mu\right\}$$
and
$$
\tilde{\Lambda}_{\mu}=\left\{x\in  \Lambda: (A_{n_k})_{*}\delta_x
  \xrightarrow[n\rightarrow\infty]{*} \mu \ \text{for some} \  \{n_k\}_{k=1}^{\infty} \ \text{such that} \lim_{k\rightarrow\infty}n_k=\infty\right\}.
$$

The topological entropy of $h_{top}(\Lambda_{\mu})$ and $h_{top}(\tilde{\Lambda}_{\mu})$ can be deduced from \cite{PS2007, FLP}. The following result proved by Pfister and Sullivan in \cite{PS2007}, and Fan, Liao, and Peyri\`ere \cite{FLP} independently for the system with the (almost) specification property. It is known that this property holds in  the non-uninformly  hyperbolic interval map we considered here.

\begin{thmx}[\cite{PS2007, FLP}]
Let $m$ be an invariant measure in $\mathcal{M}(\Lambda, T)$.  We have
\begin{equation}
h_{top}(\tilde{\Lambda}_{m})=h_{top}(\Lambda_{m})=h(m, T).
\end{equation}
Especially,
$$h_{top}(\tilde{\Lambda}_{m})=0$$ if $\lambda(m, T)=0$.
\end{thmx}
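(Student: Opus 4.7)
The plan is to verify that the non-uniformly hyperbolic system $(\Lambda, T)$ satisfies the (almost) specification property in the sense of Pfister-Sullivan, and then reduce to their general framework. Since $\Lambda_m \subseteq \tilde{\Lambda}_m$, it is enough to prove the two bounds
\begin{equation*}
h_{top}(\tilde{\Lambda}_m) \leq h(m, T) \quad \text{and} \quad h_{top}(\Lambda_m) \geq h(m, T),
\end{equation*}
as both topological entropies are then sandwiched and equal $h(m,T)$.

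For the upper bound, I would combine the Brin-Katok / Katok entropy formula with the defining property of $\tilde{\Lambda}_m$. Fix $\gamma > 0$ and a finite partition $\xi$ of $[0,1]$ of small diameter with $m(\partial \xi) = 0$. Any $x \in \tilde{\Lambda}_m$ admits a subsequence $n_k \to \infty$ with $(A_{n_k})_*\delta_x \xrightarrow{*} m$. Along such a subsequence a Shannon--McMillan--Breiman-type estimate forces the Bowen ball $B_{n_k}(x, \epsilon)$ to lie in an atom of $\bigvee_{j=0}^{n_k-1} T^{-j}\xi$ of $m$-measure at most $\exp(-n_k(h(m,T) - \gamma))$; the resulting covers bound the Bowen entropy of $\tilde{\Lambda}_m$ from above by $h(m, T) + \gamma$, and sending $\gamma \to 0$ closes this direction.

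For the lower bound, the strategy is to invoke Katok's variational characterization of $h(m,T)$ to produce, for each small $\gamma, \epsilon > 0$, $(n, \epsilon)$-separated sets $E_n \subseteq \Lambda$ of cardinality at least $\exp(n(h(m,T) - \gamma))$ whose elements have empirical measures close to $m$ in a fixed metric inducing the weak-$*$ topology. Using the (almost) specification property one then concatenates orbit blocks drawn from $E_{n_k}$ along a rapidly growing sequence $n_k$, interleaved with short gap segments whose total length is sub-linear. A careful choice of parameters guarantees (a) that the resulting Moran-type Cantor set lies inside $\Lambda_m$, because empirical measures along the natural block-endpoint subsequences converge to $m$, and (b) that the Cantor set is $(n, \epsilon/2)$-separated of comparable cardinality, delivering Bowen entropy at least $h(m,T) - O(\gamma)$.

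The main difficulty is the verification of the almost specification property in the presence of parabolic fixed points, since orbits that linger near such a point are slow and classical Bowen specification may fail; one must allow mistake sets of density less than a prescribed small constant, and then recover Katok's entropy formula compatibly with this weaker property. The parabolic case is then immediate: if $\lambda(m,T) = 0$, then under our setting $m$ concentrates on a single parabolic fixed point $x_i$, so $m = \delta_{x_i}$ and $h(m, T) = 0$, whence $h_{top}(\tilde{\Lambda}_m) = 0$ by the equality just proved.
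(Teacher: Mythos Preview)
The paper does not supply its own proof of this statement: it is quoted as Theorem~A from \cite{PS2007,FLP}, together with the remark that the required (almost) specification property holds for the maps under consideration. Your overall plan---verify specification and then invoke the Pfister--Sullivan/Fan--Liao--Peyri\`ere framework---is therefore exactly the route the paper points to. Note, incidentally, that since every branch $T|_{I_i}$ maps onto $[0,1]$, the symbolic model $(\Sigma,\sigma)$ is the full shift on $m$ symbols, which enjoys the ordinary specification property; the ``main difficulty'' you flag concerning parabolic points does not in fact arise in this setting.

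Your lower-bound sketch (Katok's formula produces $(n,\epsilon)$-separated sets with empirical measures near $m$; concatenate them via specification into a Moran-type set inside $\Lambda_m$) follows the standard construction and is correct in outline.

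The upper-bound sketch, however, has a genuine gap. You assert that for $x\in\tilde\Lambda_m$ with $(A_{n_k})_*\delta_x\to m$, ``a Shannon--McMillan--Breiman-type estimate forces the Bowen ball $B_{n_k}(x,\epsilon)$ to lie in an atom of $\bigvee_{j=0}^{n_k-1}T^{-j}\xi$ of $m$-measure at most $\exp(-n_k(h(m,T)-\gamma))$,'' and that this bounds the Bowen entropy. Two problems. First, the empirical measure of $x$ being close to $m$ says nothing about the $m$-measure of the cylinder containing $x$ (which may well be zero if $x\notin\mathrm{supp}\,m$); SMB controls only cylinders around $m$-typical points. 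Second, even granting an upper bound on the $m$-measure of each atom, that yields a \emph{lower} bound on the number of atoms, which is the wrong direction for bounding Bowen entropy from above. What the upper bound actually requires is a \emph{counting} estimate: the number of $n$-cylinders meeting $\{y:(A_n)_*\delta_y\in U\}$, for $U$ a small neighbourhood of $m$, grows at exponential rate at most $h(m,T)+\gamma$. In the full-shift model this is the method-of-types bound; more generally it follows from upper semicontinuity of the entropy map (available here since the system is expansive). That count, fed into Bowen's Carath\'eodory definition of $h_{top}$, gives the correct upper bound.

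Finally, in the parabolic case you write that $m$ ``concentrates on a single parabolic fixed point''; this is only guaranteed when $m$ is ergodic. For a general invariant $m$ with $\lambda(m,T)=0$ one has $m\in\mathcal{M}^p(\Lambda,T)$, a convex combination of the $\delta_{p_i}$, but $h(m,T)=0$ still follows by affinity of entropy, so your conclusion stands.
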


Johannes and Takahasi \cite{JT2020} study  the Hausdorff dimension of $\Lambda_{\mu}$ in the setting of non-uniformly hyperbolic interval maps even with infinitely many branches recently.
\begin{thmx}[\cite{JT2020}]\label{thmx-JT2020}
 Let $m$ be an ergodic measure in $\mathcal{M}(\Lambda, T)$.  We have
\begin{enumerate}
\item $\dim_{H}\Lambda_{m}=\frac{h(m, T)}{\lambda(m, T)}$ if $\lambda(m, T)>0$;
\item $\dim_{H}\Lambda_{m}\geq \dim^{hyp}_{H}\Lambda>0$ if $\lambda(m, T)=0$. Moreover if $T$ is a $C^2$-map, we have
$$\dim_{H}\Lambda_{m}= \dim_{H}\Lambda.$$
\end{enumerate}
\end{thmx}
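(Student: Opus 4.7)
The plan is to prove both bounds using the symbolic coding $\pi\colon\Sigma\to\Lambda$ and the Markov partition. For the upper bound, I would observe that every $x\in\Lambda_m$ is a $m$-generic point, hence $A_n\log|T'|(x)\to \lambda(m,T)$ and $A_n\phi(x)\to\int\phi\,dm$ for every $\phi\in C(\Lambda)$. Combining this with a standard Shannon--McMillan argument on the symbolic side, the cylinder $[\omega_0,\dots,\omega_{n-1}]$ containing $\pi^{-1}(x)$ has $m$-mass $\approx e^{-nh(m,T)}$, while bounded distortion on cylinders that do not accumulate at the parabolic points gives diameter $\approx e^{-n\lambda(m,T)}$. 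Packing $\Lambda_m$ into level-$n$ cylinders of such size, a direct covering calculation yields $\dim_H \Lambda_m\le h(m,T)/\lambda(m,T)+\varepsilon$. For the lower bound I would invoke Billingsley's lemma: since $m$ is ergodic, $m(\Lambda_m)=1$ by Birkhoff, and the local-dimension formula $\liminf_{r\to 0}\log m(B(x,r))/\log r \ge h(m,T)/\lambda(m,T)$ holds $m$-a.e.\ by combining Shannon--McMillan--Breiman with the same distortion estimates.

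\textbf{Plan for part (2) (parabolic case, $\lambda(m,T)=0$).} In the setting of the paper, $\lambda(m,T)=0$ forces $m=\delta_{x_i}$ for some parabolic fixed point $x_i$, so $\Lambda_m$ is the set of points whose empirical measures concentrate at $x_i$. For the lower bound $\dim_H\Lambda_m\ge\dim_H^{hyp}\Lambda$, fix $\varepsilon>0$, pick a compact hyperbolic invariant set $K\subset\Lambda$ with $\dim_H K\ge \dim_H^{hyp}\Lambda-\varepsilon$, and choose an ergodic measure $\nu$ supported on $K$ with $h(\nu)/\lambda(\nu)$ close to $\dim_H K$. I would then build a Moran-type Cantor subset $\Xi\subset\Lambda_m$ by concatenating symbolic blocks of the form
\[
w_1\,\bar{i}^{\,N_1}\,w_2\,\bar{i}^{\,N_2}\,\cdots,
\]
where each $w_k$ is drawn from an exponentially large family of $(\nu,\varepsilon)$-typical words of length $\ell_k$, and the parabolic stretches $\bar{i}^{\,N_k}$ are chosen so that $N_k/\sum_{j\le k}(\ell_j+N_j)\to 1$ fast enough that the empirical measures along any $x\in\Xi$ converge to $\delta_{x_i}$. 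Strict positivity $\dim_H^{hyp}\Lambda>0$ follows because $T$ admits nontrivial hyperbolic Cantor subrepellers. For the $C^2$ upper half-statement $\dim_H\Lambda_m=\dim_H\Lambda$, the trivial inclusion gives $\dim_H\Lambda_m\le\dim_H\Lambda$, while the classical identity $\dim_H^{hyp}\Lambda=\dim_H\Lambda$ in the $C^2$ category (variational principle for the dimension using hyperbolic ergodic measures approximating the full repeller) combined with the previous lower bound gives equality.

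\textbf{Main obstacle.} The delicate point is the dimension estimate for the Moran set $\Xi$ in part (2). Along the parabolic stretches $\bar{i}^{\,N_k}$ the derivative $|(T^{N_k})'|$ contracts only at a polynomial rate of order $N_k^{1+1/\beta}$ (as one sees from the Manneville--Pomeau model), which in particular forces cylinders to shrink much more slowly than along hyperbolic stretches. One must choose $N_k$ large enough to pull the empirical measure to $\delta_{x_i}$ but not so large that the sub-exponential parabolic contraction destroys the $h(\nu)/\lambda(\nu)$ dimension inherited from the hyperbolic words $w_k$. The technical heart of the proof will be (i) the precise derivative/distortion estimate along parabolic orbits in $C^1$-regularity, and (ii) a Moran covering lemma that gives a lower bound on $\dim_H\Xi$ via the ratio of symbolic entropy to logarithmic contraction, showing that the parabolic insertions contribute $o(1)$ to both the numerator and denominator and therefore do not lower the ratio below $h(\nu)/\lambda(\nu)-O(\varepsilon)$. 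Letting $\varepsilon\to 0$ and $K$ exhaust the hyperbolic dimension then completes the lower bound.
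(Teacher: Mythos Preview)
Your lower bound for part~(1) via Billingsley's lemma with the measure $m$ itself is a valid shortcut, more direct than the paper's Moran construction in Section~\ref{sect-first-Moran-set}; just replace ``bounded distortion'' (which is unavailable here, since even for a hyperbolic $m$ the $m$-typical orbits visit every neighbourhood of the parabolic points) by the uniform asymptotic of Lemma~\ref{lemma-growth-1}.

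There are two genuine gaps. For the \emph{upper bound in part~(1)} you invoke Shannon--McMillan--Breiman to say $m([x|_n])\approx e^{-nh(m)}$ and hence that roughly $e^{nh(m)}$ cylinders cover $\Lambda_m$. But SMB is an $m$-a.e.\ statement, while $\Lambda_m$ is defined via convergence of empirical measures and may well contain points that are not SMB-typical; for such $x$ the mass $m([x|_n])$ is completely uncontrolled, so SMB gives no bound on the \emph{number} of $n$-cylinders meeting $\Lambda_m$. The paper obtains that count variationally (Proposition~\ref{prop-key}): one puts an $n$-Bernoulli measure on the family $G_k(\alpha,\delta;n,\epsilon)$ of cylinders carrying a point with the prescribed Birkhoff statistics, pushes it to an invariant measure $\nu_n$, and bounds $\sum(\diam I_n)^s$ by $\exp\big(n(h(\nu_n)-s\lambda(\nu_n))+o(n)\big)$; sending $\epsilon\to 0$ and $k\to\infty$ forces $\nu_n\to m$, with Lemma~\ref{lemma-parabolic} used to keep $\lambda(\nu_n)$ bounded away from $0$.

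For \emph{part~(2)} your Moran scheme $w_1\bar i^{\,N_1}w_2\bar i^{\,N_2}\cdots$ matches the paper's, but the balance you propose is model-dependent: the polynomial rate $N^{1+1/\beta}$ is a $C^2$ phenomenon, and under the bare $C^1$ hypothesis of Theorem~B there is no quantitative contraction rate along parabolic stretches at all, so a constraint of the type ``$\log N_k=o(\ell_k)$'' cannot even be formulated. The paper avoids any rate estimate. Writing the $i$-th block as $w\,1^{k_i\ell_i}$ with $|w|=\ell_i$, it chooses $k_i\to\infty$ so slowly that $k_i\epsilon_i\to 0$, where $\epsilon_i$ is the common precision in \eqref{var}--\eqref{estimation} and in Lemma~\ref{lemma-growth-1}. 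One then gets
\[
\log\diam(\hat I_{n,k})=-\sum_j \ell_j\,\lambda(\mu)+O\Big(\sum_j \ell_j(1+k_j)\epsilon_j\Big),
\]
so the parabolic insertions contribute nothing to the entropy numerator and only $o(\sum_j\ell_j)$ to the denominator, using solely Lemma~\ref{lemma-growth-1} together with $g(p)=0$. Your ``precise derivative/distortion estimate along parabolic orbits'' is therefore neither needed nor available in $C^1$.
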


To the best of our knowledge, the relation between $\dim_{H}\Lambda_{m}$ and $\dim_{H}\tilde{\Lambda}_{m}$  is still lacking even in the setting of the finitely many branches. We have the following result in this direction.

\begin{theorem}\label{theorem-main}
 Let $m$ be an ergodic measure in $\mathcal{M}(\Lambda, T)$.  We have
\begin{enumerate}
\item $\dim_{H}\tilde{\Lambda}_{m}=\frac{h(m, T)}{\lambda(m, T)}$ if $\lambda(m, T)>0$;
\item $\dim_{H}\tilde{\Lambda}_{m}\geq\dim_{H}\Lambda_{m}\geq \dim^{hyp}_{H}\Lambda>0$ if $\lambda(m, T)=0$. Moreover if $T$ is a $C^2$-map, we have
$$\dim_{H}\tilde{\Lambda}_{m}=\dim_{H}\Lambda_{m}= \dim_{H}\Lambda.$$
\end{enumerate}
\end{theorem}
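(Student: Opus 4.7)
The plan is to split along $\lambda(m,T)>0$ versus $\lambda(m,T)=0$. The containment $\Lambda_m\subseteq\tilde{\Lambda}_m$ together with Theorem \ref{thmx-JT2020} delivers the lower bounds for free: $\dim_H\tilde{\Lambda}_m\ge h(m,T)/\lambda(m,T)$ in case (1), and $\dim_H\tilde{\Lambda}_m\ge\dim_H\Lambda_m\ge\dim_H^{hyp}\Lambda>0$ in case (2). In the $C^2$ parabolic regime, the trivial inclusion $\tilde{\Lambda}_m\subseteq\Lambda$ combined with Theorem \ref{thmx-JT2020}(2) closes the string of equalities. Thus the only genuine content is the upper bound $\dim_H\tilde{\Lambda}_m\le h(m,T)/\lambda(m,T)$ in case (1).

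I would establish this upper bound by a cylinder-counting argument on the Markov partition $\{\pi([i])\}_{i=1}^m$. First, since $\log|T'|$ is continuous and bounded on $\Lambda$ (note $|T'|\ge 1$ everywhere on $\Lambda$, so $\log|T'|\ge 0$), weak-$*$ convergence $(A_{n_k})_*\delta_x\to m$ forces
\[
\frac{1}{n_k}\log|(T^{n_k})'(x)|\ \longrightarrow\ \lambda(m,T).
\]
Together with the bounded-distortion estimate on the Markov piece $[x]_{n_k}$, this gives $\diam([x]_{n_k})\asymp e^{-n_k\lambda(m,T)+o(n_k)}$. Next, fix $\epsilon>0$ and $r\in\mathbb{N}$, and use continuous approximations of the indicators of level-$r$ cylinders as test functions in weak-$*$ convergence. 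It follows that every $x\in\tilde{\Lambda}_m$ lies, for all sufficiently large $k$, in the set $A_{n_k}(\epsilon,r)$ of points whose time-$n_k$ empirical $r$-block distribution is within $\epsilon$ (in $\ell^1$, say) of that of $m$. A multinomial/Stirling count on the full shift $(\Sigma,\sigma)$ bounds the number of $n$-cylinders meeting $A_n(\epsilon,r)$ by $e^{n(h_r(m)+c\epsilon)}$ for a constant $c$, where $h_r(m)\searrow h(m,T)$ as $r\to\infty$. Writing $\tilde{\Lambda}_m\subseteq\bigcup_{n\ge N}A_n(\epsilon,r)$ and combining with the diameter estimate yields, for every $s>(h(m,T)+c\epsilon)/(\lambda(m,T)-\epsilon)$,
\[
\mathcal{H}^s(\tilde{\Lambda}_m)\ \le\ \sum_{n\ge N}e^{n[(h_r(m)+c\epsilon)-s(\lambda(m,T)-\epsilon)]}\ <\ \infty,
\]
after sending $N\to\infty$. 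Taking $r\to\infty$ and $\epsilon\to 0$ produces $\dim_H\tilde{\Lambda}_m\le h(m,T)/\lambda(m,T)$.

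The parabolic case $\lambda(m,T)=0$ requires no further upper bound: under the standing hypotheses an ergodic $m$ with vanishing Lyapunov exponent is concentrated on a single parabolic fixed point, so the trivial inclusion $\tilde{\Lambda}_m\subseteq\Lambda$ combined with the already-noted lower bound gives the stated chain, and Theorem \ref{thmx-JT2020}(2) pinches the equality in the $C^2$ case.

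The main obstacle is the counting step of case (1). One must make the multinomial estimate for words with prescribed $r$-block statistics rigorous on the shift, transfer it through the semi-conjugacy $\pi$, and reconcile two diagonal limits: sending $r\to\infty$ to pass from $h_r(m)$ to the true Kolmogorov--Sinai entropy $h(m,T)$, and quantifying the bounded-distortion constant for $n_k$-cylinders whose orbits may approach a parabolic fixed point. Although weak-$*$ convergence to a hyperbolic $m$ forces vanishing $m$-mass near the parabolics (and hence few long excursions), making this quantitatively effective along the \emph{specific} subsequences $n_k$ -- rather than $m$-a.e.\ as Shannon--McMillan--Breiman would give -- requires a delicate decomposition of orbit segments into hyperbolic excursions with explicit derivative bounds, which is where the bulk of the technical work would go.
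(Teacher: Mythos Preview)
Your high-level plan matches the paper's: harvest the lower bounds from $\Lambda_m\subseteq\tilde\Lambda_m$ and focus all the work on the upper bound $\dim_H\tilde\Lambda_m\le h(m,T)/\lambda(m,T)$ in the hyperbolic case. Citing Theorem~\ref{thmx-JT2020} for the lower bounds is a legitimate shortcut; the paper re-derives them via explicit Moran constructions only for self-containedness.

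For the upper bound, however, your route and the paper's diverge. You separate ``count'' from ``diameter'': bound $\#\{n\text{-cylinders with }r\text{-block type}\approx m_r\}$ by a Stirling/multinomial estimate, bound the diameters by $e^{-n(\lambda(m,T)-\epsilon)}$, and multiply. The paper instead bounds the \emph{sum} $\sum(\diam I_n(\omega))^s$ directly by a pressure-type quantity and compares it to a variational expression: it puts an $n$-Bernoulli measure on the admissible cylinders and reads off $g_k(\alpha,\delta;s)\le\sup\{h(\mu,T)-s\lambda(\mu,T):\mu\in X(\alpha,\delta,k,\tau)\}$ (Proposition~\ref{prop-key}). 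This buys uniform control over cylinders of wildly varying diameter without ever splitting count from size. The cost is an extra step you avoid: the paper must then show that as $k\to\infty$ the constraint set $\{\mu:\int F_k\,d\mu=\int F_k\,dm\}$ stays bounded away from the parabolic simplex $\mathcal M^p(\Lambda,T)$, so that the supremum of $h/\lambda$ does not blow up (Lemma~\ref{lemma-parabolic} and the argument around \eqref{eq-subtle}). Your direct approach, if it worked, would land on $h_r(m)/(\lambda(m,T)-\epsilon)$ immediately and sidestep that issue.

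Two corrections. First, your identified ``main obstacle'' --- bounded distortion for orbit segments visiting the parabolic neighbourhood --- is not where the difficulty lies. The paper simply invokes Lemma~\ref{lemma-growth-1} (from \cite{JJOP2010}), which gives the \emph{uniform} estimate $\sup_\omega\bigl|-\tfrac{1}{n}\log D_n(\omega)-A_nG(\omega)\bigr|\to 0$; since weak-$*$ convergence already forces $A_{n_k}g(x)\to\lambda(m,T)$, the diameter bound $\diam([x]_{n_k})\le e^{-n_k(\lambda(m,T)-\epsilon)}$ follows for free along \emph{every} such subsequence, with no orbit decomposition. Second, the genuine soft spot in your sketch is the Stirling step itself: weak-$*$ convergence of $(A_{n_k})_*\delta_x$ controls the \emph{overlapping} $r$-block frequencies, and a plain multinomial count applies to \emph{non-overlapping} types. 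Passing from one to the other (or replacing the argument by a conditional-entropy/Markov-approximation count giving $H_r(m)-H_{r-1}(m)\searrow h(m,T)$) is doable but is exactly the content that the paper's $n$-Bernoulli construction packages cleanly. So your approach is viable, but the hard part is the combinatorics you flag as routine, not the distortion you flag as hard.
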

\begin{remark}
One sees that there is a great difference on the size of intrinsic level set of a invariant measure from the viewpoints of topological entropy and Hausdorff dimension.
In fact, the $C^2$ regularity condition is only used to ensure that
$$\dim_{H}^{hyp}\Lambda=\dim_{H}\Lambda.$$

Indeed, the assumption of the ergodicity is not necessary in Theorem \ref{theorem-main}. This can be removed by the careful approximating arguments and we do not address it  in this paper.
\end{remark}
By the same technique in the proof of Theorem \ref{theorem-main}, we can get the following result.
\begin{thm}\label{thm-generalization-2}
Let $K$ be a compact and connected set in $ \mathcal{M}(\Lambda, T)$ and
$$\Lambda_{K}:=\{x\in \Lambda: \rm {Asym}(\{(A_n)_{*}\delta_{x}\}_{n=1}^{\infty})=K\},$$
where $\rm {Asym}(\{(A_n)_{*}\delta_x\}_{n=1}^{\infty})$ is the set of all accumulating points of $\{(A_n)_{*}\delta_x\}_{n=1}^{\infty}$ in $\mathcal{M}(\Lambda, T)$.
Then we have
\begin{enumerate}
\item $\dim_{H}\Lambda_{K}=\inf \left\{\frac{h(m, T)}{\lambda(m, T)}: m\in K, \lambda(m, T)>0\right\}$ if $K$ has some invariant measure $\mu$ with $\lambda(\mu, T)>0$;
\item $\dim_{H}\Lambda_{K}=\dim_{H}\Lambda$ if $\lambda(m, T)=0$ for any $m\in K$ and  $T$ is a $C^2$ map.
\end{enumerate}
\end{thm}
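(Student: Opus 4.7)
The plan is to lift the Moran-style constructions underpinning Theorem \ref{theorem-main} so that they track a full compact connected target set $K\subset\mathcal{M}(\Lambda,T)$ rather than a single measure. The structural ingredient that makes this possible is compactness and connectedness of $K$: these together allow one to select a sequence $\{\nu_j\}\subset K$ that is weak-$*$ dense in $K$ with consecutive terms arbitrarily close in a compatible metric, and it is this property that forces the accumulation set of the constructed orbits to equal $K$ exactly, not a proper subset or superset.

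For the upper bound in part (1), observe that if $x\in\Lambda_K$ then every $m\in K$ lies in $\mathrm{Asym}(\{(A_n)_*\delta_x\})$, so $\Lambda_K\subset\tilde{\Lambda}_m$ for each such $m$. Applying Theorem \ref{theorem-main}(1) to any hyperbolic $m\in K$ and taking the infimum yields $\dim_H\Lambda_K\le\inf\{h(m,T)/\lambda(m,T):m\in K,\ \lambda(m,T)>0\}$. For the lower bound, fix $\varepsilon>0$, pick a hyperbolic $\mu\in K$ with $h(\mu,T)/\lambda(\mu,T)<\inf+\varepsilon$, and build a Moran subset of $\Lambda_K$ by following the scheme underlying $\dim_H\tilde{\Lambda}_\mu\ge h(\mu,T)/\lambda(\mu,T)$ in Theorem \ref{theorem-main}(1). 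The new feature is to intersperse the long $(n_k,\eta_k)$-Bowen blocks approximating $\mu$ with much shorter detour blocks approximating $\nu_1,\ldots,\nu_k$ in turn. Provided the $\mu$-block lengths grow sufficiently faster than the detour lengths, the mass-distribution count on the resulting Cantor set is dominated by the $\mu$-contribution and gives dimension at least $h(\mu,T)/\lambda(\mu,T)-\varepsilon$; simultaneously, the interspersion together with connectedness of $K$ forces $\mathrm{Asym}=K$.

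For part (2), any invariant $m$ with $\lambda(m,T)=0$ is concentrated on the finite set of parabolic fixed points, so $K$ is a connected compact subset of the finite-dimensional simplex they span. The upper bound $\dim_H\Lambda_K\le\dim_H\Lambda$ is trivial. For the lower bound, fix $\varepsilon>0$ and use the $C^2$ assumption in the form $\dim_H^{hyp}\Lambda=\dim_H\Lambda$ to pick a hyperbolic invariant compact $K'\subset\Lambda$ with $\dim_H K'>\dim_H\Lambda-\varepsilon$. I would then construct Moran subsets whose symbolic itineraries consist almost entirely of dimensionally efficient words supported on $K'$, with sparse single-symbol detours into the cylinders at the parabolic fixed points. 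Because of the neutral asymptotics at each parabolic $x_i$, a single such detour symbol produces a long dynamical sojourn near $x_i$, so by tuning the frequencies with which each parabolic fixed point is visited one realises any convex combination prescribed by the dense sequence in $K$ as a weak-$*$ accumulation point. Since the detours consume an asymptotically negligible fraction of the symbolic length, the mass-distribution count reproduces the dimension estimate of Theorem \ref{theorem-main}(2).

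The principal obstacle in both parts is a delicate two-sided control in the lower bound: the Moran skeleton must spend enough combinatorial time concentrated near the dimension-realising measure (or hyperbolic set) to achieve the target dimension, yet be combinatorially rich enough for the accumulation set to equal $K$ exactly. Connectedness of $K$ is the feature that lets one confine the accumulation within $K$ by using a single dense sequence with close consecutive terms, while compactness furnishes the uniform Bowen-type approximations needed along this sequence.
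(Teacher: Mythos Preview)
The paper does not actually prove this theorem; it remarks only that it follows ``by the same technique'' as Theorem~\ref{theorem-main} and explicitly declines to pursue it. So there is no detailed argument to compare against. Your upper bound in part~(1) is correct and is the natural route: $\Lambda_K\subset\tilde\Lambda_m$ for every $m\in K$, and Theorem~\ref{theorem-main}(1) then gives the bound after taking the infimum over hyperbolic $m$.

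The lower-bound constructions, however, share a genuine gap. In part~(1) you insert ``much shorter'' $\nu_j$-detours between long $\mu$-blocks and assert that this forces $\mathrm{Asym}=K$. It does not: $(A_n)_*\delta_x$ is the time-average over $[0,n)$, so if the cumulative $\mu$-length dominates then $(A_n)_*\delta_x$ stays uniformly close to $\mu$ and $\mathrm{Asym}=\{\mu\}$. For $\nu_j$ to be an accumulation point there must be times at which the $\nu_j$-block occupies nearly all of $[0,n)$, i.e.\ the detour must dominate the \emph{entire preceding history}, not be shorter than the neighbouring $\mu$-block. The same error recurs in part~(2): in the natural coding a single symbol $i$ corresponds to one iterate of $T$ landing in $I_i$, not to a long sojourn near $x_i$; neutral asymptotics do not manufacture time from one symbol. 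To push the empirical measure toward $\delta_{x_i}$ one needs a block $i^N$ with $N$ much larger than the preceding length --- exactly the construction in \S\ref{sect-second-Moran-set}, where the parabolic tails have length $k_i l_i$ with $k_i\to\infty$. The reason the dimension survives there is not that the parabolic pieces are short in time (they are long) but that they are negligible in the \emph{Lyapunov count} governing diameters, since $\log|T'|\approx 0$ near the neutral point; this is precisely the condition $k_i\epsilon_i\to 0$ in the paper. Your proposal has the roles of ``time-dominant'' and ``dimension-dominant'' reversed; the repair is to invert them, letting the blocks that realise $K$ dominate in time while the dimension is carried by the hyperbolic sub-blocks whose $h/\lambda$ ratio is automatically at least the infimum.
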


Here we do not pursue this generalization in this paper. Motivated by Theorem \ref{theorem-main} and Theorem \ref{thmx-JT2020}, we have the following corollary immediately.
\begin{corollary}
Let $m$ be an ergodic measure and $T$ is $C^2$ in the setting above. Then we have
\begin{equation*}
\dim_{H}\tilde{\Lambda}_m=\dim_{H}\Lambda_m.
\end{equation*}
\end{corollary}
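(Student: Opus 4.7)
The plan is to prove the corollary by a direct case split on whether the Lyapunov exponent $\lambda(m,T)$ is positive or zero, and then to read off the equality in each case by combining Theorem \ref{theorem-main} with Theorem \ref{thmx-JT2020}. Since $m$ is ergodic, these two cases are exhaustive under the dichotomy of hyperbolic versus parabolic ergodic measures noted earlier in the paper.

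In the hyperbolic case $\lambda(m,T)>0$, part (1) of Theorem \ref{theorem-main} gives
$\dim_H\tilde{\Lambda}_m=\frac{h(m,T)}{\lambda(m,T)}$,
while part (1) of Theorem \ref{thmx-JT2020} gives exactly the same value for $\dim_H\Lambda_m$. So the equality is immediate in this case, and no use is made of the $C^2$-hypothesis here.

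In the parabolic case $\lambda(m,T)=0$, part (2) of Theorem \ref{thmx-JT2020} together with the $C^2$-hypothesis yields $\dim_H\Lambda_m=\dim_H\Lambda$, and part (2) of Theorem \ref{theorem-main} correspondingly yields $\dim_H\tilde{\Lambda}_m=\dim_H\Lambda_m=\dim_H\Lambda$. Chaining these identities gives the desired equality. Note that in this parabolic regime the $C^2$-regularity is essential: it is exactly the assumption under which one can promote the inequality $\dim_H^{hyp}\Lambda\leq\dim_H\Lambda$ to an equality, as pointed out in the remark following Theorem \ref{theorem-main}.

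There is no real obstacle here: the corollary is a bookkeeping consequence of the two main theorems, and the only thing to check is that the dichotomy $\lambda(m,T)>0$ versus $\lambda(m,T)=0$ covers all ergodic measures, which is built into the paper's setting. The work has already been done in Theorem \ref{theorem-main}; the statement is flagged by the authors as following "immediately," so the proof reduces to writing down these two matching values.
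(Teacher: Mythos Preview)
Your proof is correct and matches the paper's approach: the authors state the corollary as following ``immediately'' from Theorem~\ref{theorem-main} and Theorem~\ref{thmx-JT2020}, and your case split on $\lambda(m,T)>0$ versus $\lambda(m,T)=0$ is exactly the intended reading. There is nothing to add.
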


It is proved that in \cite{FLW2002} that $\dim_{H}\tilde{\Lambda}_m=\dim_{H}\Lambda_m$ holds for the $C^1$ conformal repeller. We do not known whether this holds or not in the uniformly hyperbolic case under the $C^1$ condition.  For this direction, we have the following result.

\begin{theorem}\label{thm-uni-parabolic}
Let $m$ be an ergodic measure and assume that $T$ has only one parabolic fixed point $p$ in $[0, 1)$. Then we have
\begin{equation*}
\dim_{H}\tilde{\Lambda}_m=\dim_{H}\Lambda_m.
\end{equation*}
\end{theorem}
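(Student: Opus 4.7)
The natural approach is to split according to the sign of $\lambda(m,T)$. When $\lambda(m,T)>0$, the conclusion is immediate: Theorem \ref{theorem-main}(1) gives $\dim_{H}\tilde{\Lambda}_{m}=h(m,T)/\lambda(m,T)$, Theorem \ref{thmx-JT2020}(1) gives the same value for $\dim_{H}\Lambda_{m}$, and the equality holds with no further work.

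Assume now $\lambda(m,T)=0$. Since $m$ is ergodic with zero Lyapunov exponent, its support is contained in the set of parabolic fixed points; by the uniqueness hypothesis we therefore have $m=\delta_{p}$. The inclusion $\Lambda_{\delta_{p}}\subset\tilde{\Lambda}_{\delta_{p}}$ yields $\dim_{H}\Lambda_{\delta_{p}}\le\dim_{H}\tilde{\Lambda}_{\delta_{p}}$ for free, so the entire content of the theorem is the reverse inequality.

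To prove $\dim_{H}\tilde{\Lambda}_{\delta_{p}}\le\dim_{H}\Lambda_{\delta_{p}}$, the plan is to exploit the uniqueness of $p$ to reduce to a uniformly hyperbolic situation by inducing. Fix a small open neighborhood $V\ni p$ and set $H=\Lambda\setminus V$; because $p$ is the only parabolic point, $T|_{H}$ is uniformly expanding, and the first-return map $R:=T^{\tau}$, with $\tau(y)=\min\{n\ge 1:T^{n}y\in H\}$, is a piecewise $C^{1}$ expanding Markov map of $H$ with (possibly infinitely) many branches indexed by the return time. A point $x$ lies in $\Lambda_{\delta_{p}}$ (respectively $\tilde{\Lambda}_{\delta_{p}}$) exactly when the proportion of time its orbit spends outside $V$ tends to zero for every (respectively along a subsequence for every) neighborhood $V\ni p$, and the Portmanteau theorem lets us choose the good subsequence uniformly in $V$. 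I would then mimic the Fan--Liao--Wang construction of \cite{FLW2002}, which establishes $\dim_{H}\tilde{\Lambda}_{m}=\dim_{H}\Lambda_{m}$ for $C^{1}$ conformal repellers, but pushed through the induced system: given $x\in\tilde{\Lambda}_{\delta_{p}}$ with its good subsequence $(n_{k})$, build a companion point $\Phi(x)\in\Lambda_{\delta_{p}}$ whose symbolic coding copies that of $x$ on the blocks $[n_{k},n_{k+1})$ but is padded in between by long blocks of the symbol indexing the parabolic branch, of length fast enough to force Birkhoff-average convergence to $\delta_{p}$ at \emph{every} time. A Moran-cover argument on the resulting cylinders, combined with the polynomial diameter estimate for parabolic cylinders, should then produce a covering of $\tilde{\Lambda}_{\delta_{p}}$ whose $s$-dimensional Hausdorff content is controlled by that of $\Lambda_{\delta_{p}}$ for every $s>\dim_{H}\Lambda_{\delta_{p}}$.

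The hard part will be the distortion control inside the long parabolic excursions: with only the $C^{1}$ hypothesis the Koebe principle is unavailable and must be replaced by excursion-specific polynomial estimates adapted to the neutral fixed point. The uniqueness of $p$ is crucial precisely here: it concentrates every long slow excursion at a single neutral point, so all distortion estimates can be performed in one local chart, and it rules out the pathology (which would appear with two or more parabolic points) of orbits oscillating between distinct slow regions and contributing to $\tilde{\Lambda}_{\delta_{p}}$ with no natural counterpart in $\Lambda_{\delta_{p}}$.
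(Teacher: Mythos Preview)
Your treatment of the case $\lambda(m,T)>0$ matches the paper exactly. For the parabolic case $m=\delta_{p}$, however, the paper follows a completely different and much shorter route than the one you outline, and your route has a gap you yourself identify but do not close.

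The paper never attempts to bound $\dim_{H}\tilde{\Lambda}_{\delta_{p}}$ by $\dim_{H}\Lambda_{\delta_{p}}$ directly. Instead it shows that \emph{both} equal $\dim_{H}\Lambda$ by proving that the complement is small. Concretely, assuming without loss of generality that $\dim_{H}^{hyp}\Lambda<\dim_{H}\Lambda$ (otherwise Theorem~\ref{theorem-main}(2) already gives equality), the paper observes that, because $p$ is the unique parabolic point, $x\notin\Lambda_{\delta_{p}}$ forces the empirical measures of $x$ to accumulate on some invariant measure $\mu\neq\delta_{p}$, hence one with $\lambda(\mu,T)>0$. Covering the compact set $\{\mu:d(\mu,\delta_{p})\geq 1/n\}$ by finitely many balls $B_{d}(\mu_{i},\rho_{n})$ and applying Proposition~\ref{prop-key-2} to each gives $\dim_{H}(\Lambda\setminus\Lambda_{\delta_{p}})\leq\dim_{H}^{hyp}\Lambda<\dim_{H}\Lambda$, so $\dim_{H}\Lambda_{\delta_{p}}=\dim_{H}\Lambda$, and since $\Lambda_{\delta_{p}}\subset\tilde{\Lambda}_{\delta_{p}}\subset\Lambda$ the desired equality follows. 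No induced system, no companion-point map, no distortion estimates at the neutral point are needed.

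Your proposed approach, by contrast, hinges on two things you leave open. First, the ``companion point'' map $\Phi$ and the claim that a Moran cover on its image controls the Hausdorff content of $\tilde{\Lambda}_{\delta_{p}}$ is only gestured at; it is not clear how padding by parabolic symbols yields a covering of the \emph{original} set rather than an injection into $\Lambda_{\delta_{p}}$. Second, and more seriously, the ``polynomial diameter estimate for parabolic cylinders'' you invoke is not available under a bare $C^{1}$ hypothesis: without a prescribed tangency order at $p$ (which a $C^{2}$ or $C^{1+\alpha}$ assumption would give) there is no a priori rate for $|(T^{n})'|$ near the neutral fixed point, so the excursion-length-to-diameter conversion your covering argument needs is exactly the missing ingredient you flag at the end. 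The paper's argument sidesteps this entirely by working at the level of invariant measures and the variational expression for $\dim_{H}^{hyp}\Lambda$.
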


If $T$ has more than 1 fixed points, we do not know how to control  the persistent recurrence behaviors between (or among) the multiple parabolic fixed points in the  $C^1$ regularity condition.

%
%
%

In the proof of Theorem \ref{theorem-main}, we also recover a complete proof of Theorem B by using our framework for the reader's convenience. In \cite{JT2020}, a proof of Theorem \ref{thmx-JT2020} was already given even in the framework of infinitely branches by  a series of delicate approximation arguments.
Here, we will give a direct proof in our simple setting, which is inspired by \cite{FLP, JJOP2010}.

This paper is organised as follows. In Section 2, some some basic preliminaries and results are  collected. In Section 3, we will prove the upper bound of Theorem \ref{theorem-main}. In Section 4, we will make some effort to give a unified framework of geometric Moran construction in dimension 1, which may be of independent interest. We  believe that this framework can be used to deal with many problems in multifractal analysis  in non-uniformly hyperbolic dynamical system. In Section 5, we will prove the lower bound for Theorem 1. Finally, Theorem 2 will be proved in Section 6.
\section{Preliminaries}
In this section, we will collect some basic facts. For any $\omega\in \Sigma$, we denote $D_n(\omega)=\max\{|x-y|: x, y\in S_{\omega_0}\circ S_{\omega_1}\cdots S_{\omega_{n-1}}([0, 1])\}$.

 Let $g(x)=\log |T'(x)|$ for any $x\in \Lambda$ and $G(\omega)=\log |T'(\pi(\omega))|$ for any $\omega\in \Sigma$. And we know that $g(\pi(\omega))=G(\omega)$ for $\omega\in \Sigma$. For any $f \in C(\Lambda, \mathbb{R})$, we denote $F: \Sigma\rightarrow \mathbb{R}$ by
\begin{equation}
F(\omega)=f(\pi(\omega)).
\end{equation}
For any $n\in \mathbb{N}$, we define
\begin{equation}
\var_n(f):=\sup\{|f(\pi(\omega)-f(\pi(\tau))|:\omega_0=\tau_0, \dots, \omega_{n-1}=\tau_{n-1}\}.
\end{equation}

The following lemma shows a relation between the logarithm growth of $D_n(\omega)$ and the Birkhoff average of $G(\omega)$.
\begin{lemma}{\cite{JJOP2010}}\label{lemma-growth-1}
Under the setting above, we have
\begin{equation*}
\lim_{n\rightarrow\infty}\sup_{\omega\in \Sigma}\Big|-\frac{\log D_n(\omega)}{n}-A_nG(\omega)\Big|=0.
\end{equation*}
\end{lemma}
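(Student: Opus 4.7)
The plan is to apply the mean value theorem inside each cylinder and then compare Birkhoff sums along the two orbits $(T^k y_\omega)_{k<n}$ and $(T^k\pi(\omega))_{k<n}$, absorbing the non-uniform expansion by a Ces\`aro averaging step and the uniform continuity of $g:=\log|T'|$ on the compact repeller $\Lambda$. Writing $J_\omega^n:=S_{\omega_0}\circ\cdots\circ S_{\omega_{n-1}}([0,1])$, the map $T^n$ restricts to a $C^1$ monotone bijection from $J_\omega^n$ onto $[0,1]$, so the MVT gives some $y_\omega\in J_\omega^n$ with $|(T^n)'(y_\omega)|=1/D_n(\omega)$. By the chain rule,
\[
-\log D_n(\omega)=\sum_{k=0}^{n-1}\log|T'(T^k y_\omega)|=S_n g(y_\omega),
\]
hence $-\log D_n(\omega)/n = A_n g(y_\omega)$. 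Since $A_n G(\omega)=A_n g(\pi(\omega))$, the whole task is to bound $|A_n g(y_\omega)-A_n g(\pi(\omega))|$ uniformly in $\omega$.

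Both $T^k y_\omega$ and $T^k\pi(\omega)$ lie in the same cylinder $J_{\sigma^k\omega}^{\,n-k}$, so
\[
|T^k y_\omega - T^k\pi(\omega)|\le D_{n-k}(\sigma^k\omega)\le \delta_{n-k},\qquad \delta_j:=\sup_{\tau\in\Sigma}D_j(\tau).
\]
The next step is to verify $\delta_j\to 0$. I would argue by contradiction using compactness of $\Sigma$: if $D_{n_k}(\omega^{(k)})\ge\epsilon$ along some sequence with $\omega^{(k)}\to\omega^*$, then because $D_n$ depends only on the first $n$ symbols, for every fixed $N$ one eventually has $D_{n_k}(\omega^{(k)})\le D_N(\omega^*)$, forcing $D_N(\omega^*)\ge\epsilon$ for all $N$; but the nested intervals $J_{\omega^*}^N$ shrink to the single point $\pi(\omega^*)$, a contradiction.

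Since $g$ is continuous on the compact set $\Lambda$, it has a modulus of continuity $\eta$ with $\eta(t)\to 0$ as $t\to 0^+$ and $|g(x)-g(y)|\le\eta(|x-y|)$. Combining the previous displays,
\[
\bigl|A_n g(y_\omega)-A_n g(\pi(\omega))\bigr|\le\frac1n\sum_{k=0}^{n-1}\eta(\delta_{n-k})=\frac1n\sum_{j=1}^{n}\eta(\delta_j),
\]
which is a Ces\`aro average of the null sequence $\eta(\delta_j)$ and therefore tends to $0$. The bound is independent of $\omega$, yielding the claimed uniform limit.

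The main obstacle is the lack of uniform expansion: near a parabolic fixed point, $D_n(\omega)$ can decay only polynomially, so no explicit rate is available. This is exactly why the proof relies on the qualitative fact $\delta_n\to 0$ extracted from compactness of $\Sigma$, rather than on a quantitative contraction estimate; once that is in place, the uniform continuity of $g$ together with the Ces\`aro smoothing completely absorb the slow decay.
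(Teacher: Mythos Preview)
The paper does not supply its own proof of this lemma; it is stated with a citation to \cite{JJOP2010} and used as a black box. Your argument is the standard one and is correct in substance.

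One small imprecision worth fixing: you invoke the modulus of continuity of $g$ on the compact set $\Lambda$, but the mean-value point $y_\omega$ need not lie in $\Lambda$ (its forward orbit is only guaranteed to remain in $\bigcup_i I_i$ for the first $n$ steps, since $T^n y_\omega\in[0,1]$ may fall outside $\bigcup_i I_i$). This is harmless: $g=\log|T'|$ extends continuously to each closed interval $I_i$ because $T$ is $C^1$ there with $|T'|\ge 1>0$, and both $T^k y_\omega$ and $T^k\pi(\omega)$ lie in the same interval $J_{\sigma^k\omega}^{\,n-k}\subset I_{\omega_k}$. So take $\eta$ to be a common modulus of continuity for $g$ on the finitely many compacta $I_1,\dots,I_m$, and the estimate goes through verbatim. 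Your compactness argument for $\delta_j\to 0$ is essentially Dini's theorem applied to the monotone, locally constant (hence continuous) sequence $D_n$ on the compact space $\Sigma$, which is a clean way to handle the lack of uniform contraction near the parabolic points.
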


In our following discussion, we also need the following fact.
\begin{lemma}{\cite{JJOP2010}}
Let $f$ be a continuous function on $\Lambda$. Then we have
\begin{equation*}
\lim_{n\rightarrow\infty}\frac{1}{n}\rm{\var}_n(f)=0.
\end{equation*}
\end{lemma}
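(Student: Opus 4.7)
The plan is in two stages. First, I would observe that the stated conclusion is in fact essentially trivial given the compactness of the underlying space, and requires no dynamical input. Second, I would sketch how to obtain the substantially stronger statement $\var_n(f)\to 0$ itself, which is likely what is proved in \cite{JJOP2010}.

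For the trivial observation: since $\Lambda \subset [0,1]$ is closed and hence compact, and $f \in C(\Lambda,\mathbb{R})$, the function $f$ is bounded, say $\|f\|_\infty = M < \infty$. Every pair of points $\pi(\omega),\pi(\tau)$ entering the supremum defining $\var_n(f)$ lies in $\Lambda$, so $|f(\pi(\omega))-f(\pi(\tau))| \leq 2M$. This gives $\var_n(f) \leq 2M$ uniformly in $n$, and dividing by $n$ immediately yields the lemma.

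For the stronger statement $\var_n(f)\to 0$, the plan is to first prove
$$\sup_{\omega\in\Sigma} D_n(\omega)\longrightarrow 0,$$
and then invoke uniform continuity. Pointwise, $D_n(\omega)\to 0$ is built into the definition of $\pi$, since the nested compact intervals $S_{\omega_0}\circ\cdots\circ S_{\omega_{n-1}}([0,1])$ have intersection $\{\pi(\omega)\}$. Uniformity is the delicate point: away from the finitely many parabolic fixed points $x_1,\dots,x_m$, the inverse branches $S_i$ contract uniformly, so only sequences $\omega$ that spend long runs in a parabolic branch can cause trouble. For such $\omega$ the worst case is $\omega=(i,i,i,\dots)$ with $x_i$ parabolic, giving the cylinder $S_i^n([0,1])$; but $S_i$ maps $I_i$ into itself with unique fixed point $x_i$ and is a strict (non-uniform) contraction, so $S_i^n([0,1])\downarrow\{x_i\}$ and the diameter tends to $0$. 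Combining the two regimes via a splitting of $\omega$ into "far from parabolic" and "close to parabolic" digits then gives $\sup_\omega D_n(\omega)\to 0$. Finally, uniform continuity of $f$ on $\Lambda$ supplies, for every $\epsilon>0$, some $\delta>0$ with $|x-y|<\delta \Rightarrow |f(x)-f(y)|<\epsilon$; choosing $n$ large enough that $\sup_\omega D_n(\omega)<\delta$ yields $\var_n(f)\leq\epsilon$.

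The main obstacle, if one pursues the stronger form, is making the uniformity in $\omega$ rigorous near the parabolic fixed points, where the monotone but arbitrarily slow decay of $\text{diam}(S_i^k([0,1]))$ has to be combined cleanly with the uniform contraction available on the hyperbolic portion of any mixed word. For the weak form in the lemma as stated, however, none of this is needed: the boundedness of $f$ alone suffices.
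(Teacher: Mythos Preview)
Your trivial observation is correct: as literally stated, the lemma follows at once from boundedness of $f$ on the compact set $\Lambda$, since $\var_n(f)\le 2\|f\|_\infty$ for every $n$. The paper gives no proof of its own, merely citing \cite{JJOP2010}, so there is nothing further to compare against.

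You are also right that the substantive statement is the undivided one, $\var_n(f)\to 0$, and this is in fact what the paper needs later: the requirement $\var_n(A_n f_j\circ\pi)<\epsilon_i$ in the preparation of the Moran construction (display \eqref{var}) follows from $\var_k(f_j)\to 0$ via the estimate $\var_n(A_n f)\le \tfrac{1}{n}\sum_{k=1}^n \var_k(f)$ and Ces\`aro averaging, not from the weaker divided form. Your sketch for the stronger result---reduce to $\sup_\omega D_n(\omega)\to 0$ and then invoke uniform continuity of $f$---is the standard route; the delicate point you flag, uniformity in $\omega$ near the parabolic fixed points, can be made rigorous by noting that the nested intervals $S_i^n([0,1])$ decrease to a closed $S_i$-invariant subinterval, which must reduce to $\{x_i\}$ since $|T'|>1$ off the fixed points.
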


Since $\Lambda$ is a compact set, there exists a countable dense subset $\{f_n\}_{n=1}^{\infty}$ in $C(\Lambda, \mathbb{R})$ and we can assume $f_n\not\equiv 0$ for any $n\in \mathbb{N}$ without loss of generality. Let $\mathcal{M}(\Lambda)$ be the set of all probability measures. We introduce
a metric $d$ on $\mathcal{M}(\Lambda)$ by
$$d(\mu, \nu):=\sum_{n=1}^{\infty}\frac{|\int f_n d \mu-\int f_n d\nu|}{2^{n}\|f_n\|}.$$ And the topology induced by the metric $d$ on $\mathcal{M}(\Lambda)$ is compatible
with the weak-$*$ topology.

\begin{lemma}\label{decomposition-level-lemma}
Let $m$ be an invariant measure in $\mathcal{M}(\Lambda, T)$ and $$\Lambda_{m, k}:=\{x\in \Lambda: \lim\limits_{n\rightarrow\infty}A_nf_i(x)=\int f_i d m, \ \text{for} \ 1\leq i\leq k\}.$$ Then we have
\[\Lambda_{m}=\bigcap_{k=1}^{\infty}\Lambda_{m, k}.\]
\end{lemma}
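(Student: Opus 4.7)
The plan is to show the two inclusions of the set equality by unpacking the definition of weak-$\ast$ convergence in terms of the explicit metric $d$ on $\mathcal{M}(\Lambda)$ introduced just before the lemma, using the denseness of $\{f_n\}_{n=1}^\infty$ in $C(\Lambda,\mathbb{R})$.

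For the inclusion $\Lambda_m\subset\bigcap_k\Lambda_{m,k}$, I would simply observe that if $x\in\Lambda_m$ then by definition $(A_n)_\ast\delta_x\to m$ in the weak-$\ast$ topology, so for every continuous $f$ the numerical sequence $\int f\,d(A_n)_\ast\delta_x = A_nf(x)$ converges to $\int f\,dm$. Applying this to each $f_i$ gives $x\in\Lambda_{m,k}$ for every $k$.

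For the converse $\bigcap_k\Lambda_{m,k}\subset\Lambda_m$, I would fix $x$ with $A_nf_i(x)\to\int f_i\,dm$ for every $i\ge 1$ and show $d\bigl((A_n)_\ast\delta_x,m\bigr)\to 0$. Since each term in the series
\begin{equation*}
d\bigl((A_n)_\ast\delta_x,m\bigr)=\sum_{i=1}^\infty\frac{|A_nf_i(x)-\int f_i\,dm|}{2^i\|f_i\|}
\end{equation*}
is bounded by $2/2^i$, the tail $\sum_{i>N}$ is at most $2^{1-N}$ uniformly in $n$. Given $\varepsilon>0$, I pick $N$ with $2^{1-N}<\varepsilon/2$, then use the pointwise convergence hypothesis on each of the finitely many indices $i=1,\ldots,N$ to make the head sum smaller than $\varepsilon/2$ for all large $n$. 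This yields $d((A_n)_\ast\delta_x,m)\to 0$, hence $(A_n)_\ast\delta_x\xrightarrow{\ast}m$ (the metric $d$ induces the weak-$\ast$ topology, as noted just above the lemma), so $x\in\Lambda_m$.

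There is no real obstacle here; the statement is essentially a reformulation of the fact that weak-$\ast$ convergence of probability measures on a compact metric space is equivalent to convergence of the integrals against any countable dense subfamily of $C(\Lambda,\mathbb{R})$. The only point that requires a little care is the tail estimate above, which is why the explicit metric $d$ with summable weights $2^{-i}\|f_i\|^{-1}$ is introduced. This decomposition lemma will be useful later because it allows one to replace convergence of empirical measures by the tractable condition of convergence of finitely many Birkhoff averages at a time.
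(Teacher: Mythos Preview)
Your argument is correct and complete; the paper itself omits the proof as ``rather simple,'' and what you have written is exactly the standard verification via the metric $d$ that one would expect. There is nothing to add or compare.
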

The proof of Lemma \ref{decomposition-level-lemma} is rather simple, we will omit the proof here. The existence of parabolic fixed points has a important impact on the size of $m$-generic set. Assume that $T$ has
$l+1$ different parabolic fixed points $\{p_i, 0\leq i \leq l\}$ and we denote
$$\mathcal{M}^{p}(\Lambda, T):=\left\{\sum_{i=0}^{l}\lambda_i\delta_{p_i}: \lambda_i\geq 0\ \text{and} \ \sum_{i=0}^{l}\lambda_{i}=1\right\}.$$
It is obvious that $\mathcal{M}^{p}(\Lambda, T)$ is a $l$-dimensional simplex in $\mathcal{M}(\Lambda, T)$ and
$$
\mathcal{M}^{p}(\Lambda, T)=\{\mu\in \mathcal{M}(\Lambda, T): \lambda(\mu, T)=0\}.
$$

\begin{lemma}\label{lemma-parabolic}
Let $\mu$ be a hyperbolic measure in $\mathcal{M}(\Lambda, T)$ and
$$\mathcal{M}_{\mu, k}:=\left\{\nu\in \mathcal{M}(\Lambda, T): \int f_i d\nu=\int f_i d\mu \ \text{for} \ 1\leq i\leq k\right\}.$$
Then we have
$$
\mathcal{M}_{\mu, k}\cap \mathcal{M}^{p}(\Lambda, T)=\emptyset
$$
for $k$ sufficiently large.
\end{lemma}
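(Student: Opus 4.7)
The plan is to argue by contradiction using weak-$*$ compactness and the density of the countable family $\{f_n\}_{n=1}^\infty$ in $C(\Lambda,\mathbb{R})$. Suppose the conclusion fails. Then for every $k\in\mathbb{N}$ we can choose some $\nu_k\in \mathcal{M}_{\mu,k}\cap \mathcal{M}^p(\Lambda,T)$, i.e.\ a measure supported on the parabolic fixed points whose integrals against $f_1,\ldots,f_k$ coincide with those of $\mu$.

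Since $\mathcal{M}^p(\Lambda,T)$ is a closed $l$-simplex, hence a compact subset of $\mathcal{M}(\Lambda,T)$, I would extract from $\{\nu_k\}$ a subsequence $\nu_{k_j}$ converging in the weak-$*$ topology to some $\nu_\infty\in \mathcal{M}^p(\Lambda,T)$. For any fixed $i$, once $k_j\ge i$ we have $\int f_i\, d\nu_{k_j}=\int f_i\, d\mu$, and letting $j\to\infty$ (and using that $f_i$ is continuous on the compact set $\Lambda$) gives $\int f_i\, d\nu_\infty=\int f_i\, d\mu$ for every $i\in\mathbb{N}$.

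Now I would invoke the density of $\{f_n\}_{n=1}^\infty$ in $C(\Lambda,\mathbb{R})$: for any $f\in C(\Lambda,\mathbb{R})$ and any $\varepsilon>0$ there is $f_i$ with $\|f-f_i\|_\infty<\varepsilon$, and therefore $|\int f\, d\nu_\infty-\int f\, d\mu|\le 2\varepsilon$. Letting $\varepsilon\to 0$ forces $\int f\, d\nu_\infty=\int f\, d\mu$ for every continuous $f$, whence $\nu_\infty=\mu$ by the Riesz representation theorem.

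This is the contradiction, since $\nu_\infty\in\mathcal{M}^p(\Lambda,T)$ satisfies $\lambda(\nu_\infty,T)=0$ while the hypothesis asserts $\lambda(\mu,T)>0$. The argument is essentially soft and I do not expect any serious obstacle; the only point worth checking is that $\mathcal{M}^p(\Lambda,T)$ is weak-$*$ closed (immediate, since it is the convex hull of finitely many Dirac masses) and that the continuity of $\log|T'|$ on $\Lambda$ used to characterise parabolic measures as the zero-Lyapunov-exponent measures has already been established earlier in the paper.
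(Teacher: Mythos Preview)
Your proof is correct and follows essentially the same contradiction strategy as the paper: assume a parabolic measure $\nu_k$ exists in $\mathcal{M}_{\mu,k}$ for arbitrarily large $k$, show the sequence converges to $\mu$, and conclude $\lambda(\mu,T)=0$. The only cosmetic difference is that the paper avoids the compactness-and-subsequence step by observing directly from the definition of the metric $d$ that $d(\nu_k,\mu)\le 2^{-k+1}$, so the \emph{full} sequence already converges to $\mu$; your detour through a weak-$*$ limit point in $\mathcal{M}^p(\Lambda,T)$ is unnecessary but harmless.
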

\noindent{\bf Proof of Lemma \ref{lemma-parabolic}}  We assume that
there exists a sequence $\{k_i\}_{i=1}^{\infty}$, such that
$$
\mu_{k_i}\in\mathcal{M}_{\mu, k_i}\cap \mathcal{M}^{p}(\Lambda, T).
$$
It follows that
\[
\int f_jd \mu_{k_i}=\int f_j d \mu
\]
for any $j\leq k_i$, which implies that
\[d(\mu_{k_i}, \mu)\leq \frac{1}{2^{k_i}}\]
and  $$\lim_{i\rightarrow\infty}\mu_{k_i}=\mu.$$
Recalling that $\lambda(\mu_{k_i}, T)=0$ for any $i\in \mathbb{N}$, we get $\lambda(\mu, T)=0$, which contradicts the fact that $\mu$ is hyperbolic.
\hfill $\square$

\section{Proof for the upper bound}

Recall that $g(x)=\log |T'(x)|$ for any $x\in \Lambda$. We define
\[I_n(\omega):=S_{\omega_0}\circ S_{\omega_2}\circ \dots \circ S_{\omega_{n-1}}([0,1])\]
for $\omega\in\Sigma$.
For any  $\delta>0$, $\epsilon\in (0, \delta)$, $\alpha=(\alpha_1, \dots, \alpha_k)\in \mathbb{R}^{k}$  and $n\in \mathbb{N}$,
let  $G_{k}(\alpha, \delta; n, \epsilon)$ be the set of all cylinders  in $\Sigma_n$ such that for any $[\omega]_n\in G_{k}(\alpha, \delta; n, \epsilon)$    there exists $x\in I_n(\omega)$ satisfying the following properties
\begin{enumerate}
\item $\Big|\frac{1}{n}S_n\phi_{i}(x)-\alpha_i\Big|<\epsilon$ for $1\leq i \leq k$;
\item $A_ng(x)\geq \delta-\epsilon$.
\end{enumerate}

We define the two pressure like functions $g_{k}(\alpha, \delta; s, n, \epsilon)$
and $g_{k}^{*}(\alpha, \delta; s, n, \epsilon)$  associated to $G_{k}(\alpha, \delta; n, \epsilon)$ as follows,
\begin{equation}
g_{k}(\alpha, \delta; s, n, \epsilon):=\sum_{[\omega]_n\in G_{k}(\alpha, \delta; n, \epsilon) }(\diam I_{n}(\omega))^s
\end{equation}
and
\begin{equation}
g_{k}^{*}(\alpha, \delta; s, n, \epsilon):=\sum_{[\omega]_n\in G_{k}(\alpha, \delta; n, \epsilon) }\sup_{x\in I_n(\omega)}\exp(-s S_n g(x))
\end{equation}

Let
\begin{equation}
\overline{g}_{k}(\alpha, \delta; s):=\lim_{\epsilon\rightarrow 0}\limsup_{n\rightarrow\infty}\frac{1}{n}\log g_{k}(\alpha, \delta; s, n, \epsilon)
\end{equation}
and
\begin{equation}
\underline{g}_{k}(\alpha, \delta; s):=\lim_{\epsilon\rightarrow 0}\liminf_{n\rightarrow\infty}\frac{1}{n}\log g_{k}(\alpha, \delta; s, n, \epsilon).
\end{equation}
Similarly, we define $\underline{g}^*_{k}(\alpha, \delta; s)$ and $\overline{g}^*_{k}(\alpha, \delta; s)$. Then we have  the following Lemma.
\begin{lemma}
Under  the  setting above,  we have
\begin{equation}
\underline{g}_{k}(\alpha, \delta; s)=\overline{g}_{k}(\alpha, \delta; s)=\underline{g}_{k}^{*}(\alpha, \delta; s)=\overline{g}_{k}^{*}(\alpha, \delta; s)
\end{equation}
for any $s\geq 0$.
\end{lemma}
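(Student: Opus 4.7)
The plan is to split the four-way equality into two pieces: first, that $\overline{g}_k=\overline{g}_k^*$ and $\underline{g}_k=\underline{g}_k^*$ (the diameter sum and the distortion sum coincide asymptotically); second, that $\overline{g}_k^*=\underline{g}_k^*$ (the partition functions have a genuine exponential growth rate). Together these equalities give the lemma.

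For the first piece, I would use Lemma \ref{lemma-growth-1} together with the variation lemma. Since $\diam I_n(\omega)=D_n(\omega)$ and $A_nG(\omega)=\tfrac{1}{n}S_ng(\pi(\omega))$, Lemma \ref{lemma-growth-1} yields $-\log D_n(\omega)=S_ng(\pi(\omega))+n\rho_n$ with $\sup_\omega|\rho_n|\to 0$. Moreover, because $T^jx$ and $T^j\pi(\omega)$ both lie in $I_{n-j}(\sigma^j\omega)$ whenever $x\in I_n(\omega)$, continuity of $g$ and the fact that $\sup_\omega\diam I_n(\omega)\to 0$ give $\var_n(g)\to 0$, hence $V_n:=\sum_{j=1}^{n}\var_j(g)=o(n)$ by Cesaro averaging. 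Thus, for every $x\in I_n(\omega)$,
\[
|S_ng(x)-S_ng(\pi(\omega))|\le V_n=o(n),
\]
and combining with the distortion estimate,
\[
(\diam I_n(\omega))^{s}=\sup_{x\in I_n(\omega)}e^{-sS_ng(x)}\cdot e^{\pm s\,o(n)}
\]
uniformly in $\omega$. Summing over $[\omega]_n\in G_k(\alpha,\delta;n,\epsilon)$ and taking logarithms gives $\tfrac{1}{n}\log g_k=\tfrac{1}{n}\log g_k^{*}+o(1)$, so $\overline{g}_k=\overline{g}_k^{*}$ and $\underline{g}_k=\underline{g}_k^{*}$ after the outer $\lim_{\epsilon\to 0}$.

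For the second piece, I would exploit the Markov structure via concatenation to obtain a near-supermultiplicative inequality for $g_k^{*}$. Given $[\omega]_n\in G_k(\alpha,\delta;n,\epsilon)$ and $[\tau]_m\in G_k(\alpha,\delta;m,\epsilon)$, the concatenated cylinder $[\omega\tau]_{n+m}$ satisfies $I_{n+m}(\omega\tau)\subset I_n(\omega)$ and $T^n(I_{n+m}(\omega\tau))=I_m(\tau)$. Decomposing $S_{n+m}\phi_i(z)=S_n\phi_i(z)+S_m\phi_i(T^nz)$ for $z\in I_{n+m}(\omega\tau)$ and using the variation bounds $V_n^{\phi_i},V_m^{\phi_i}=o(n),o(m)$, one verifies
\[
\Big|\tfrac{1}{n+m}S_{n+m}\phi_i(z)-\alpha_i\Big|\le\epsilon+o(1),\qquad A_{n+m}g(z)\ge\delta-\epsilon-o(1),
\]
so $[\omega\tau]_{n+m}\in G_k(\alpha,\delta;n+m,\epsilon')$ for some $\epsilon'=\epsilon+o(1)$ as $\min(n,m)\to\infty$. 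The same variation estimate for $g$ yields
\[
\sup_{z\in I_{n+m}(\omega\tau)}e^{-sS_{n+m}g(z)}\ge e^{-o(n+m)}\sup_{x\in I_n(\omega)}e^{-sS_ng(x)}\cdot\sup_{y\in I_m(\tau)}e^{-sS_mg(y)},
\]
and summing over distinct pairs $([\omega]_n,[\tau]_m)$ produces
\[
g_k^{*}(\alpha,\delta;s,n+m,\epsilon')\ge e^{-o(n+m)}\,g_k^{*}(\alpha,\delta;s,n,\epsilon)\,g_k^{*}(\alpha,\delta;s,m,\epsilon).
\]
A standard Fekete-type iteration (choose $m$ almost realizing the $\limsup$ and write $N=km+r$) then gives $\liminf_N\tfrac{1}{N}\log g_k^{*}(\ldots;N,\epsilon')\ge\limsup_n\tfrac{1}{n}\log g_k^{*}(\ldots;n,\epsilon)$. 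Letting $\epsilon\to 0$ (and hence $\epsilon'\to 0$) forces $\underline{g}_k^{*}\ge\overline{g}_k^{*}$, which combined with the trivial reverse inequality completes the proof.

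The hard part will be the careful bookkeeping of how $\epsilon'$ is built from $\epsilon,n,m$ in the concatenation step and the interchange of the double limit $\lim_{\epsilon\to 0}\limsup_n$ versus $\lim_{\epsilon\to 0}\liminf_n$. This is delicate in the non-uniformly hyperbolic setting because $\diam I_n(\omega)$ for cylinders approaching a parabolic fixed point can decay only polynomially; it is precisely the uniform-in-$\omega$ statement in Lemma \ref{lemma-growth-1} and the variation lemma that absorb all such non-uniformity into the $o(n)$ remainders.
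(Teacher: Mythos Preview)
The paper does not give its own proof of this lemma; it simply writes ``The proof can be essentially deduced from \cite{FLW2002}.'' Your two-step outline---uniform distortion via Lemma~\ref{lemma-growth-1} and the variation lemma to identify $g_k$ with $g_k^{*}$, then near-supermultiplicativity from concatenation of good cylinders plus a Fekete iteration to collapse $\limsup$ and $\liminf$---is correct and is precisely the Feng--Lau--Wu scheme that the paper is citing, with the only extra care being the $o(n)$ absorption of parabolic distortion that you already flag in your closing paragraph.
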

\begin{proof}
The proof can be essentially deduced from \cite{FLW2002}.
\end{proof}
We denote the common limit above by $g_{k}(\alpha, \delta; s)$.
Recalling that there exists a dense set  $\{f_n\}_{n=1}^{\infty}$ in $C(\Lambda, \mathbb{R})$ and $f_n\not\equiv 0$ for any $n\in \mathbb{N}$.
Denote $F_{k}: \Lambda\rightarrow \mathbb{R}^{k}$ by
\begin{equation}
F_{k}(x)=(f_1(x), \dots, f_k(x)).
\end{equation}
And for any $\mu\in \mathcal{M}(\Lambda, T)$, we define
\begin{equation}
\int F_k d\mu: =\bigg(\int f_1 d\mu, \dots, \int f_k d\mu\bigg).
\end{equation}

For any $\alpha=(\alpha_1, \dots, \alpha_k)$, we introduce the norm $\|\cdot\|_{\infty}$  on $\mathbb{R}^{k}$ by
\begin{equation}
\|\alpha\|_{\infty}:=\sup_{1\leq i\leq k}|\alpha_i|.
\end{equation}

\begin{prop}\label{prop-key}
Let $m$ be an invariant measure in $\mathcal{M}(\Lambda, T)$ such that $\lambda(m, T)>0$ and $\alpha=\int F_k d m\in \mathbb{R}^{k}$.
For any $\delta\in (0, \lambda(m, T))$ and $\tau\in (0, \delta)$, we have
\begin{equation}
g_{k}(\alpha, \delta; s)\leq \sup\left\{h(\mu, T)-s\lambda(\mu, T): \mu\in
X(\alpha, \delta, k, \tau)
\right\},
\end{equation}
where
$$
X(\alpha, \delta, k, \tau):=\left\{\mu\in \mathcal{M}(\Lambda, T): \left\|\int F_k d\mu-\alpha\right\|_{\infty}\leq \tau, \lambda(\mu, T)\geq \delta-\tau\right\}.
$$
\end{prop}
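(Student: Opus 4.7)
The plan is to implement the weighted empirical measure technique that goes back to Bowen's proof of the variational principle and was adapted for conformal repellers in \cite{FLW2002}. First I would, for each $n$ and each cylinder $[\omega]_n\in G_k(\alpha,\delta;n,\epsilon)$, select one distinguished point $x_{\omega,n}\in I_n(\omega)$ witnessing the two defining conditions, and endow the (finite) collection of such cylinders with the Gibbs-type weights
$$p_{\omega,n}:=\frac{\exp(-sS_ng(x_{\omega,n}))}{Z_n},\qquad Z_n:=\sum_{[\omega]_n\in G_k(\alpha,\delta;n,\epsilon)}\exp(-sS_ng(x_{\omega,n})),$$
so that $Z_n=g_k^{*}(\alpha,\delta;s,n,\epsilon)$. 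Form the point measure $\tilde\nu_n:=\sum_\omega p_{\omega,n}\delta_{x_{\omega,n}}$ and its time average $\nu_n:=\tfrac{1}{n}\sum_{j=0}^{n-1}T^j_*\tilde\nu_n$. Extract a subsequence $n_i\to\infty$ along which $\nu_{n_i}\to\mu$ in weak-$*$ topology and $\tfrac{1}{n_i}\log Z_{n_i}\to\overline{g}_k^{*}(\alpha,\delta;s)$; by Krylov--Bogolubov the limit $\mu$ is $T$-invariant.

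Next I would verify $\mu\in X(\alpha,\delta,k,\tau)$. From condition~(1) in the definition of $G_k$ we get $\|\int F_k\,d\tilde\nu_n-\alpha\|_\infty<\epsilon$, which survives the Cesaro average and, by continuity of the $f_i$, the weak-$*$ limit, giving $\|\int F_k\,d\mu-\alpha\|_\infty\leq\epsilon$. Condition~(2) together with continuity of $g=\log|T'|$ on $\Lambda$ yields $\lambda(\mu,T)=\int g\,d\mu\geq\delta-\epsilon$. Choosing $\epsilon\leq\tau$ places $\mu$ in $X(\alpha,\delta,k,\tau)$.

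The pressure--entropy bridge is the algebraic identity
$$\log Z_n=H(p_{\cdot,n})-s\sum_\omega p_{\omega,n}S_ng(x_{\omega,n})=H(p_{\cdot,n})-sn\int g\,d\nu_n,$$
where $H(p_{\cdot,n})=-\sum_\omega p_{\omega,n}\log p_{\omega,n}$. The crucial estimate is the Misiurewicz-type entropy inequality
$$\limsup_{n\to\infty}\tfrac{1}{n}H(p_{\cdot,n})\leq h(\mu,T),$$
obtained by comparing $H_{\tilde\nu_n}(\mathcal{P}^{(n)})$, where $\mathcal{P}=\{I_1,\ldots,I_m\}$ and $\mathcal{P}^{(n)}=\bigvee_{j=0}^{n-1}T^{-j}\mathcal{P}$, to $\tfrac{1}{q}H_{\nu_n}(\mathcal{P}^{(q)})$ through the standard block decomposition $n=aq+b$, then sending $n\to\infty$ and afterwards $q\to\infty$. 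Combining the identity with this entropy bound and with $\int g\,d\nu_{n_i}\to\lambda(\mu,T)$ yields $\overline{g}^{*}_k(\alpha,\delta;s)\leq h(\mu,T)-s\lambda(\mu,T)$. Since the preceding lemma identifies the four quantities, this is $g_k(\alpha,\delta;s)\leq h(\mu,T)-s\lambda(\mu,T)$, and taking the supremum over admissible $\mu$ finishes the argument.

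The part I expect to be most delicate is the Misiurewicz entropy estimate: although structurally classical, it relies on $\mathcal{P}$ behaving like a generating Markov partition so that the atoms of $\mathcal{P}^{(n)}$ coincide with the cylinders $I_n(\omega)$ used to define $p_{\omega,n}$, and one must make sure that the boundary points of $\mathcal{P}$ (and in particular the parabolic fixed points, where $g$ is close to zero but still continuous) are not charged by $\mu$ in a way that spoils the comparison. A secondary technical issue is ensuring the simultaneous passage to the limit $\epsilon\downarrow 0$, which is handled by compactness of $X(\alpha,\delta,k,\tau)$ in the weak-$*$ topology.
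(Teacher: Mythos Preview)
Your approach is correct but takes a different route from the paper. Where you pass to a weak-$*$ limit and invoke the Misiurewicz entropy inequality, the paper instead builds, for each $n$, an explicit $T$-invariant measure $\nu_n$: it puts the weight distribution $\tilde\mu_n$ on $\Sigma_n$, forms the $n$-Bernoulli product $(\tilde\mu_n)^{\otimes}$ on $\Sigma\cong(\Sigma_n)^{\mathbb N}$, pushes forward by $\pi$, and Ces\`aro-averages over $T,\ldots,T^{n-1}$. The Bernoulli structure gives the exact identity $h((\tilde\mu_n)^{\otimes},\sigma^n)=H(\tilde\mu_n)$, and the elementary bound $h(\nu_n,T)\geq\frac{1}{n}h((\tilde\mu_n)^{\otimes},\sigma^n)$ replaces your block-decomposition argument. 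Since each $\nu_n$ is already invariant and, for $\epsilon<\tau/2$ and $n$ large, lies in $X(\alpha,\delta,k,\tau)$, no subsequence extraction or upper-semicontinuity of entropy is needed: one compares $\frac{1}{n}\log Z_n$ directly with $h(\nu_n,T)-s\lambda(\nu_n,T)$ and takes $\limsup_n$. The paper's device (borrowed from \cite{JJOP2010}) thus sidesteps precisely the partition-boundary issue you flag as delicate; your route is the classical Walters-style variational-principle argument and works here as well. One small slip in your write-up: condition~(1) controls $A_nf_i(x_{\omega,n})$, not $f_i(x_{\omega,n})$, so it yields $\|\int F_k\,d\nu_n-\alpha\|_\infty<\epsilon$ directly rather than first for $\tilde\nu_n$.
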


\begin{proof}
Let $a=g_{k}(\alpha, \delta; s)$. We have
$$
a=\lim_{\epsilon\rightarrow 0}\limsup_{n\rightarrow\infty}\frac{1}{n}\log\left(\sum_{[\omega]_n\in G_{k}(\alpha, \delta; n, \epsilon) }\sup_{x\in I_n(\omega)}\exp(-s S_n g(x))\right).
$$
We first introduce a probability measure $\tilde{\mu}_n$ on $\Sigma_n$. For any $[\omega]_n\in G_{k}(\alpha, \delta; n, \epsilon) $, we define
\[
\tilde{\mu}_n(\omega):=\frac{\sup\limits_{x\in I_n(\omega)}\exp(-sS_n g(x))}{\sum\limits_{[\omega]_n\in G_{k}(\alpha, \delta; n, \epsilon) }\sup\limits_{x\in I_n(\omega)}\exp(-sS_n g(x))}
\]
Now, we construct a $n$-Bernoulli measure $(\tilde{\mu}_n)^{\otimes}$ on $\Sigma$, which is just the products of countably many copies of $(\Sigma_n, \tilde{\mu}_n)$. Here, we identify the probability spaces $((\Sigma_n)^{\mathbb{N}}, (\tilde{\mu}_n)^{\mathbb{N}})$ with $(\Sigma, (\tilde{\mu}_n)^{\otimes})$ by the following isomorphism $\Theta: ((\Sigma_n)^{\mathbb{N}}, (\tilde{\mu}_n)^{\mathbb{N}})\rightarrow  (\Sigma, (\tilde{\mu}_n)^{\otimes})$
\begin{equation}
\Theta (\theta_1, \theta_2,  \dots, \theta_n, \dots)=(\theta_1\theta_2\dots\theta_n\dots)
\end{equation}
for any $(\theta_1, \dots, \theta_n, \dots)\in ((\Sigma_n)^{\mathbb{N}}, (\mu_n)^{\mathbb{N}})$.
It is easy to see that $(\tilde{\mu}_n)^{\otimes}$ is ergodic measure on $(\Sigma, \sigma^n)$. We refer the interesting readers to \cite{JJOP2010} for a very good introduction of the $n$-Bernoulli measure. It follows that
\begin{align*}
h((\tilde{\mu}_n)^{\otimes}, \sigma^n)=H(\tilde{\mu}_n)
= s\int_{\Sigma_n}\inf_{x\in I_n(\omega)}S_n g(x)d\mu_n(\omega)+I_n,
\end{align*}
where
$$I_n:=\log\left(\sum_{[\omega]_n\in G_{k}(\alpha, \delta; n, \epsilon) }\sup_{x\in I_n(\omega)}\exp(-s S_n g(x))\right).
$$

Let $\mu_{n}^{\otimes}=\pi_{*}(\tilde{\mu}_n)^{\otimes}$ and $\nu_n=(A_{n})_{*}\mu_{n}^{\otimes}$. We have $\nu_n\in \mathcal{M}(\Lambda, T)$ and
\begin{align*}
& h(\nu_n, T)-s\int\log|T'(x)|d\nu_n(x)
\geq & -\frac{|s|}{n}\sup_{[\omega]_n}\sup_{x, y\in I_n(\omega)}|S_n g(x)-S_n g(y)|+\frac{I_n}{n}.
\end{align*}

Also, it is not difficult to prove that for $n$ sufficiently large, we have
\begin{equation}
\int g d\nu_n=\int A_n g d\mu_{n}^{\otimes}\geq \delta -2\epsilon
\end{equation}
and
\begin{equation}
\Big|\int F_{k}d\nu_n-\alpha\Big|_{\infty}\leq 2\epsilon.
\end{equation}
Then for any $\epsilon<\frac{\tau}{2}$, we have $\nu_n\in\in
X(\alpha, \delta, k, \tau)$ and
\begin{align*}
&\sup\left\{h(\mu, T)-s\lambda(\mu, T): \mu\in
X(\alpha, \delta, k, \tau)\right\}\\
\geq & \limsup_{n\rightarrow\infty}\left(h(\nu_n, T)-s\int\log|T'(x)|d\nu_n(x)\right)\\
\geq & \limsup_{n\rightarrow\infty} \frac{1}{n}\log\left(\sum_{[\omega]_n\in G_{k}(\alpha, \delta; n, \epsilon) }\sup_{x\in I_n(\omega)}\exp(-s S_n g(x))\right).
\end{align*}
Taking $\epsilon \rightarrow 0$, we complete the proof of Proposition \ref{prop-key}.
\end{proof}

\begin{proof}

Now we are going to prove  the upper bound
\begin{equation*}
\dim_{H}(\tilde{\Lambda}_m)\leq \frac{h(m, T)}{\lambda(m, T)}
\end{equation*}
of the first part of Theorem \ref{theorem-main}.  Here, we need some ideas of thermal-formalism method. We will make some delicate modification of the arguments in \cite{FLW2002}. Some key estimates there break down in our cases, and we need to be very careful  to deal with the hyperbolicity.

We define $$h_{k}(\alpha, \delta, s, \tau):=\sup\left\{h(\mu, T)-s\lambda(\mu, T): \mu\in
X(\alpha, \delta, k, \tau)
\right\}.$$ It is not difficult to prove that
$$s\mapsto h_{k}(\alpha, \delta, s, \tau)$$
is strictly decreasing in $[0, +\infty)$. Moreover,
\begin{equation}
h_{k}(\alpha, \delta, 0,\tau)\geq 0, \ \text{and} \ \lim_{s\rightarrow +\infty}h_{k}(\alpha, \delta, s,\tau)=-\infty.
\end{equation}
Thus, there exists unique $s_{*}=s_{*}(\alpha, \delta; k, \tau)\in [0, \infty)$ such that
\begin{equation}
h_{k}(\alpha, \delta, s_{*},\tau)=0.
\end{equation}
Indeed, we have
\begin{equation}
s_{*}=\sup\left\{\frac{h(\mu, T)}{\lambda(\mu, T)}: \mu\in X(\alpha, \delta, k, \tau)\right\}.
\end{equation}

Since the map
$s\mapsto h_{k}(\alpha, \frac{1}{j}, s, \tau)$ is decreasing and has a zero at $s=s_{*}$, by Proposition \ref{prop-key},  we have
$$t=g_{k}\Big(\alpha, \frac{1}{j}, s_{*}+\epsilon \Big)<0$$
for any $\epsilon>0$.
Thus, there exists $L_0=L_0(j)\geq j+1$,
for any $l\geq L_0$, we have
\begin{equation}
\limsup_{n\rightarrow\infty}\frac{1}{n}\log g_{k}\Big(\alpha, \frac{1}{j}; s_{*}+\epsilon, n, \frac{1}{l}\Big)<-\frac{t}{4}.
\end{equation}

It follows that there exists $U_0=U_0(l)\in \mathbb{N}$ such that for any $u\geq U_0$, we have
\begin{equation}
\sum_{[\omega]_n\in G_{k}(\alpha, \frac{1}{j}, u, \frac{1}{l})}(\diam I_u(\omega))^{s_{*}+\epsilon}\leq
\exp(-u\frac{t}{2}).
\end{equation}

Recalling that
$$
\tilde{\Lambda}_{m}=\left\{x\in  \Lambda: (A_{n_k})_{*}\delta_x
  \xrightarrow[n\rightarrow\infty]{*} m \ \text{for some} \  \{n_k\}_{k=1}^{\infty} \ \text{such that} \lim_{k\rightarrow\infty}n_k=\infty\right\},
$$
it is straightforward to see that
\begin{align*}
\tilde{\Lambda}_{m}\subset & \bigcup_{j=1}^{\infty}
\bigcup_{l=L_0(j)}^{\infty}\bigcap_{k=1}^{\infty}\bigcap_{n=1}^{\infty}
\bigcup_{u=n}^{\infty}G_{k}\left(\alpha, \frac{1}{j}, u, \frac{1}{l}\right)\\
\subset & \bigcup_{j=1}^{\infty}
\bigcup_{l=L_0(j)}^{\infty}\bigcap_{k=1}^{\infty}
\bigcup_{u\geq U_0}^{\infty}G_{k}\left(\alpha, \frac{1}{j}, u, \frac{1}{l}\right)
\end{align*}
We denote
\begin{equation*}
E_{j, k, l}=\bigcup_{u\geq U_0}^{\infty}G_{k}\left(\alpha, \frac{1}{j}, u, \frac{1}{l}\right).
\end{equation*}

Thus, we proved that
\begin{align*}
\mathcal{H}^{s_{*}+\epsilon}(E_{j, k, l})\leq & \sum_{u\geq U_0}\sum_{[\omega]_n\in G_{k}(\alpha, \frac{1}{j}, u, \frac{1}{l})}(\diam I_u(\omega))^{s_{*}+\epsilon}\\
\leq &
\sum_{u\geq U_0}\exp(-u\frac{t}{2})<+\infty.
\end{align*}
It follows that $\dim_{H} E_{j, k, l}\leq s_{*}$. And this implies that
$\dim_{H}\tilde{\Lambda}_m\leq s_{*}$, that is
\begin{align*}
\dim_{H}\tilde{\Lambda}_{m}\leq & \sup\left\{\frac{h(\mu, T)}{\lambda(\mu, T)}: \mu\in X(\alpha, \delta, k, \tau)\right\}\\
\leq & \sup \left\{\frac{h(\mu, T)}{\lambda(\mu, T)}: \mu\in X(\alpha,  k, \tau)\right\}
\end{align*}
where
\begin{equation*}
X(\alpha, k, \tau):=\left\{\mu\in \mathcal{M}(\Lambda, T): \left\|\int F_k d\mu-\alpha\right\|_{\infty}\leq \tau, \lambda(\mu, T)>0\right\}.
\end{equation*}

Noting that $\lambda(m, T)>0$, we have there exists $k_0\in \mathbb{N}$ such that
\begin{equation*}
\mathcal{M}_{m, k}\bigcap \mathcal{M}^{p}(\Lambda, T)=\emptyset
\end{equation*}
for any $k\geq k_0$ by Lemma \ref{lemma-parabolic}.

 Since both $\mathcal{M}_{m, k}$ and $\mathcal{M}^{p}(\Lambda, T)$ are compact, we know that there exists $\gamma>0$ such that
 \begin{equation}\label{eq-subtle}
 d(\nu_1, \nu_2)=\sum_{n=1}^{\infty}\frac{1}{2^{n}\|f_n\|}\Big|\int f_n d\nu_1-\int f_n d\nu_2\Big|\geq \gamma
 \end{equation}
 for any $\nu_1\in \mathcal{M}_{m, k}$, $\nu_2\in \mathcal{M}^{p}(\Lambda, T)$. Choose $k_*\in \mathbb{N}$ such that $k_{*}\geq k_0$ and $\frac{1}{2^{k_{*}-1}}< \gamma$.

We claim that for any $k\geq k_{*}$, there exists $\tau_0=\tau_0(k)$ such that for any $\tau\in (0, \tau_0)$
we have $\tilde{X}(\alpha, k, \tau)=X(\alpha, k, \tau)$, where
\begin{equation*}
\tilde{X}(\alpha, k, \tau):=\left\{\mu\in \mathcal{M}(\Lambda, T): \left\|\int F_k d\mu-\alpha\right\|_{\infty}\leq \tau\right\}.
\end{equation*}
Otherwise, we know that $X(\alpha, k, \tau)\subsetneq \tilde{X}(\alpha, k, \tau)$. Thus, there exists a sequence $\{\tau_n\}_{n=1}^{\infty}$ with $\lim\limits_{n\rightarrow\infty}\tau_n=0$ and invariant measures $\{\mu_n\}$ in $\mathcal{M}(\Lambda, T)$ with $\lambda(\mu_n, T)=0$ such that
$$\left\|\int F_k d\mu_n-\int F_k dm\right\|_{\infty}=\left\|\int F_k d\mu-\alpha\right\|_{\infty}\leq \tau_n.$$
We could assume that $\lim\limits_{n\rightarrow\infty}\mu_n=\nu$ for some $\nu \in \mathcal{M}(\Lambda, T)$ up to a sub-sequence.
It follows that
\begin{equation*}
\lambda(\nu, T)=\lim_{n\rightarrow\infty}\lambda(\mu_n, T)=0\
\text{and}\ \int F_k d\nu-\int F_k dm=0.
\end{equation*}
Hence, we get
\begin{align*}
d(\nu, m)=&\sum_{n=k+1}^{\infty}\frac{1}{2^{n}\|f_n\|}\Big|\int f_n d\nu-\int f_n d m\Big|\\
\leq & \sum_{n=k+1}^{\infty}\frac{1}{2^{n-1}}\leq \frac{1}{2^{k_{*}-1}}<\gamma.
\end{align*}

It is contradictory to the fact that $d(\nu, m)\geq \gamma$ by \eqref{eq-subtle} since
 $\nu\in \mathcal{M}^{p}(\Lambda, T)$ and $m\in \mathcal{M}_{m, k}$. Since $\tau$ is arbitrary, for any $k\ge k_*$, it is easy to prove that
\begin{align*}
\dim_{H}\tilde{\Lambda}_{m}\leq &\sup \left\{\frac{h(\mu, T)}{\lambda(\mu, T)}: \int F_k d\mu=\alpha\right\}\\
=&\sup \left\{\frac{h(\mu, T)}{\lambda(\mu, T)}: \int F_k d\mu=\alpha, \lambda(\mu, T)>0\right\}\\
=&\sup \left\{\frac{h(\mu, T)}{\lambda(\mu, T)}: \int F_k d\mu=\int F_k dm, \lambda(\mu, T)\geq \delta_{*}\right\}
\end{align*}
for some $\delta_{*}=\delta_{*}(m)$.
Noting that $$\bigcap_{k=k_*}^{\infty}\left\{\mu\in \mathcal{M}(\Lambda, T):\int F_k d\mu=\int F_k dm\right\}=\{m\},$$
it is easy to get
\begin{equation*}\label{eq-upperbound}
\dim_{H}(\tilde{\Lambda}_m)\leq \frac{h(m, T)}{\lambda(m, T)}.
\end{equation*}
\end{proof}

For any $m$  in $\mathcal{M}(\Lambda, T)$ and  $R>0$, we define
\[B_d(m,R):=\{\mu\in \mathcal{M}(\Lambda, T):
 d(\mu, m)\leq R\ \text{and}\ \lambda(\mu, T)>0\}.\]

We can prove  the following result by almost the same method presented here with minor changes. And we omit its proof here. We point out this result is essential to the proof of Theorem \ref{thm-uni-parabolic}.
\begin{prop}\label{prop-key-2}
Let $m$ be an invariant measure in $\mathcal{M}(\Lambda, T)$ such that $\lambda(m, T)>0$ and $B_d(m,R)$ as above. Furthermore let
\begin{equation*}
\mathcal{A}_{r}:=\left\{x\in \Lambda, Asym(\{(A_n)_{*}\delta_x\}_{n=1}^{\infty})\cap B_{d}(m, r)\neq \emptyset \right\}
\end{equation*}
for any $r\in (0, R)$. Then we have
\begin{equation*}
\dim_{H}\mathcal{A}_{r}\leq \sup\left\{\frac{h(\mu, T)}{\lambda(\mu, T)}: \mu\in B_{d}(m, r)\right\}.
\end{equation*}
\end{prop}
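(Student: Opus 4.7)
The plan is to adapt the upper bound argument for Theorem \ref{theorem-main} to this setting by replacing the single target measure $m$ with the ball $B_d(m, r)$ and tracking weak-$*$ proximity through the truncation $d_k(\mu, \nu) := \sum_{i=1}^{k} 2^{-i} \|f_i\|^{-1} |\int f_i\, d\mu - \int f_i\, d\nu|$ of the metric $d$.

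Since every $x \in \mathcal{A}_r$ has an accumulation measure with strictly positive Lyapunov exponent, I first decompose $\mathcal{A}_r = \bigcup_{j=1}^{\infty} \mathcal{A}_r^{1/j}$, where
$$\mathcal{A}_r^{\eta} := \left\{x \in \Lambda : \mathrm{Asym}(\{(A_n)_{*}\delta_x\}_{n=1}^{\infty}) \cap B^{\eta} \neq \emptyset\right\},\quad B^{\eta} := \{\mu \in \mathcal{M}(\Lambda, T) : d(\mu, m) \leq r,\ \lambda(\mu, T) \geq \eta\}.$$
Countable stability of Hausdorff dimension reduces the claim to bounding $\dim_H \mathcal{A}_r^{\eta}$ for each fixed $\eta > 0$. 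For $\epsilon > 0$ small and $k$ large enough that $2^{-k} < \epsilon$, I replace the cylinder family $G_k(\alpha, \delta; n, \epsilon)$ of Section 3 by
$$\widetilde{G}_k(\eta; n, \epsilon) := \left\{[\omega]_n : \exists\, x \in I_n(\omega),\ d_k((A_n)_{*}\delta_x, m) \leq r + \epsilon,\ A_n g(x) \geq \eta - \epsilon\right\}.$$
The tail estimate $\sum_{i > k} 2^{-i} \leq 2^{-k} < \epsilon$ shows that any $x \in \mathcal{A}_r^{\eta}$ lies in some $I_n(\omega)$ with $[\omega]_n \in \widetilde{G}_k(\eta; n, \epsilon)$ for infinitely many $n$, hence $\mathcal{A}_r^{\eta} \subset \bigcap_{N} \bigcup_{n \geq N} \bigcup_{[\omega]_n \in \widetilde{G}_k(\eta; n, \epsilon)} I_n(\omega)$.

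Next I would rerun the $n$-Bernoulli-measure construction of Proposition \ref{prop-key} with this cylinder family: weight each $[\omega]_n \in \widetilde{G}_k(\eta; n, \epsilon)$ by $\sup_{x \in I_n(\omega)} \exp(-s S_n g(x))$, push forward to $\Lambda$, and Cesaro-average to produce $\nu_n \in \mathcal{M}(\Lambda, T)$. Any weak-$*$ accumulation point $\mu$ of $\{\nu_n\}$ satisfies $d_k(\mu, m) \leq r + 2\epsilon$ and $\lambda(\mu, T) \geq \eta - 2\epsilon$, and the same free-energy inequality as in the main proof forces $h(\mu, T) - s \lambda(\mu, T)$ to dominate the exponential growth rate of the $\widetilde{G}_k$-sums. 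Setting this rate to zero identifies the critical exponent
$$s_{*}(k, \eta, \epsilon) = \sup\left\{\frac{h(\mu, T)}{\lambda(\mu, T)} : d_k(\mu, m) \leq r + 2\epsilon,\ \lambda(\mu, T) \geq \eta - 2\epsilon\right\},$$
and the standard Hausdorff covering estimate yields $\dim_H \mathcal{A}_r^{\eta} \leq s_{*}(k, \eta, \epsilon)$.

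Finally, letting $\epsilon \to 0$ and $k \to \infty$, compactness of the constraint sets together with upper semi-continuity of $h(\cdot, T)$ and weak-$*$ continuity of $\lambda(\cdot, T) = \int \log|T'|\, d\cdot$ (the integrand is continuous and bounded on $\Lambda$) collapse $s_{*}(k, \eta, \epsilon)$ to $\sup\{h(\mu, T)/\lambda(\mu, T) : \mu \in B^{\eta}\}$, which is bounded above by $\sup\{h(\mu, T)/\lambda(\mu, T) : \mu \in B_d(m, r)\}$; taking the supremum over $\eta = 1/j$ concludes. The principal structural simplification compared to the proof of Theorem \ref{theorem-main} is that the built-in restriction $\lambda(\mu, T) \geq \eta$ in $\mathcal{A}_r^{\eta}$ keeps all approximating measures uniformly away from $\mathcal{M}^{p}(\Lambda, T)$, so the parabolic separation argument based on Lemma \ref{lemma-parabolic} is not needed. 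The main point requiring genuine care is therefore the interchange of the limits in $\epsilon$ and $k$ with the supremum, which is handled by a standard diagonal argument on the compact family of constraint sets.
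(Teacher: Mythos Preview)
The paper does not give a proof of this proposition, stating only that it follows by ``almost the same method'' as the upper-bound argument for Theorem~\ref{theorem-main} ``with minor changes.'' Your proposal is a correct and faithful implementation of precisely that adaptation: you replace the single target $\alpha=\int F_k\,dm$ by the metric ball $B_d(m,r)$ (tracked via the truncation $d_k$), rerun the $n$-Bernoulli construction of Proposition~\ref{prop-key}, and carry out the same Hausdorff covering estimate. Your preliminary decomposition $\mathcal{A}_r=\bigcup_j \mathcal{A}_r^{1/j}$, which bakes in a uniform Lyapunov floor $\lambda\geq 1/j$, is a mild streamlining relative to the paper's treatment of Theorem~\ref{theorem-main} (it makes the separate appeal to Lemma~\ref{lemma-parabolic} unnecessary), but this falls well within the ``minor changes'' the paper alludes to.
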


\section{Moran constructions}
Since the proof of the lower bound of  Theorem \ref{theorem-main} relies heavily on the constructions of Moran sets, which can be seen as the generalized cantor sets with  product structures.

We first establish a framework of abstract construction of the Moran set, which may be of independent interest. Indeed this kind of techniques have been greatly used in a certain amount of literature, for example see \cite{FLW2002, FLP, Chung2010}, however, it seems that there is no unified framework.
\subsection{Basic settings in Moran Construction}

Let $\mathcal{I}=\{I_{n, j}:  1\leq j\leq m_n, n\in \mathbb{N}\}$ be a sequence of the intervals
in $[0,1]$ such that
\begin{enumerate}
\item $int I_{n, i}\bigcap int I_{n, j}=\emptyset$ for any $n\in \mathbb{N}$ and $1\leq i\neq  j\leq m_n$;
\item  $I_{n, i}$ contains at least one element  $I_{n+1, j}$ for any $n\in \mathbb{N}$, and $1\leq i\leq m_n$;
\item for each $n\in \mathbb{N}$,  $I_{n+1, j}$ is contained in one of the elements in $I_{n, i}$ for $1\leq j\leq m_{n+1}$.
\end{enumerate}

We denote $r_{n}=\min\limits_{1\leq i\leq m_n} \diam (I_{n, i})$ and $R_n=\max\limits_{1\leq i\leq m_{n}} \diam(I_{n, i})$. We  assume  that  $\lim\limits_{n\rightarrow\infty} R_n=0$. Let $Y:=\bigcap\limits_{n=1}^{\infty}\bigcup\limits_{i=1}^{m_n} I_{n, i}$. Here, $I_{n, i}$ is said to be the {\bf $i$-th fundamental interval  in the $n$-th level} of $Y$. We will call $Y$ to be Moran set. For any $x\in Y$, there exists a sequence of nested intervals $\{I_{n, l_n(x)}\}_{n=1}^{\infty}$ such that  $x=\bigcap\limits_{n=1}^{\infty} I_{n, l_n(x)}$.

The following observation is crucial in the estimates of the lower bound of the Hausdroff dimension of  the Moran set.

\begin{lemma}\label{lemma-Moran}
Assume that the fundamental intervals satisfy the following conditions
\begin{equation}\label{regualr-growth-1}
\begin{split}
\lim_{n\rightarrow \infty} \frac{\log R_{n}}{\log r_n}=1  \quad \text{and} \quad
\lim_{n\rightarrow \infty} \frac{\log r_{n+1}}{\log r_n}=1.
\end{split}
\end{equation}
Let $\eta$ be a measure supported on $Y$ which satisfies the balanced property
\begin{equation}\label{regular-growth-2}
\lim\limits_{n\rightarrow\infty}\frac{\min\limits_{1\leq i\leq m_n} \log \eta(I_{n, i})}{\max\limits_{1\leq i\leq m_n}\log \eta(I_{n, i})}=1.
\end{equation}
 Then we have
\begin{equation*}\label{eq-subsequence}
\begin{split}
\limsup_{r\rightarrow 0}\frac{\log \eta(B(x,r))}{\log r}=
\limsup_{n\rightarrow\infty}\frac{\log \eta(I_{n, l_n(x)})}{\log r_n}\\
\liminf_{r\rightarrow 0}\frac{\log \eta(B(x,r))}{\log r}=
\liminf_{n\rightarrow\infty}\frac{\log \eta(I_{n, l_n(x)})}{\log r_n}
\end{split}
\end{equation*}
for any $x\in Y$.

\end{lemma}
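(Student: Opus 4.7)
The plan is, given a small $r > 0$, to sandwich $B(x,r)$ between two specific levels of the Moran construction. Define $n_*(r)$ to be the largest $n$ with $r_n > r$, and $N_*(r)$ to be the smallest $n$ with $R_n \le r$, so that $r_{n_*+1} \le r < r_{n_*}$ and $R_{N_*} \le r < R_{N_*-1}$. Both indices tend to $\infty$ as $r \to 0$.

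First I would establish two purely geometric bounds. For the upper bound: at level $n_*$ each fundamental interval has length at least $r_{n_*} > r$, so on the line the open ball $B(x,r)$ can meet at most three intervals $I_{n_*, j}$ (the one containing $x$, plus possibly one on each side). Hence
\begin{equation*}
\eta(B(x,r)) \;\le\; 3\max_{1\le j\le m_{n_*}} \eta(I_{n_*, j}).
\end{equation*}
For the lower bound: $I_{N_*, l_{N_*}(x)}$ contains $x$ and has diameter at most $R_{N_*} \le r$, so $I_{N_*, l_{N_*}(x)} \subseteq \overline{B(x,r)}$, whence $\eta(B(x,r)) \ge \eta(I_{N_*, l_{N_*}(x)})$.

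Next I would translate these into logarithmic estimates. Write $a_n := \log\eta(I_{n, l_n(x)})/\log r_n$. The balanced property \eqref{regular-growth-2} forces $\max_j \log\eta(I_{n,j}) = (1+o(1))\log\eta(I_{n, l_n(x)})$, because $\log\eta(I_{n, l_n(x)})$ is always trapped between $\min$ and $\max$ whose ratio tends to $1$. The two parts of \eqref{regualr-growth-1} together yield $\log r = (1+o(1))\log r_{n_*(r)} = (1+o(1))\log r_{N_*(r)}$ as $r\to 0$, since $\log r$ is wedged between $\log r_{n_*+1}$ and $\log r_{n_*}$ on one side, and between $\log R_{N_*}$ and $\log R_{N_*-1}$ on the other, and all of these equal $\log r_{n_*}$ (respectively $\log r_{N_*}$) up to a multiplicative $(1+o(1))$. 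Substituting into the geometric bounds and dividing by $\log r < 0$, which flips the inequalities, one obtains
\begin{equation*}
(1+o(1))\,a_{n_*(r)} + o(1) \;\le\; \frac{\log \eta(B(x,r))}{\log r} \;\le\; (1+o(1))\,a_{N_*(r)}.
\end{equation*}

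Finally, as $r\to 0$ the step functions $n_*(r)$ and $N_*(r)$ sweep through all sufficiently large integers, so $\limsup_{r\to 0} a_{n_*(r)} = \limsup_{r\to 0} a_{N_*(r)} = \limsup_n a_n$, and analogously for $\liminf$; the sandwich then delivers both claimed identities. I expect the main obstacle to be the asymptotic bookkeeping in the logarithmic step: the $(1+o(1))$ coming from $\log r/\log r_{n_*}$, the one coming from the balanced-property ratio, and the residual $o(1)$ absorbing the $\log 3$ term, all have to compose cleanly without creating terms that interact badly with a possibly unbounded sequence $a_n$, so a brief case split (bounded versus unbounded $a_n$) may be needed to confirm that the $(1+o(1))$-factor is harmless in each regime.
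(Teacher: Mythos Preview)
Your proposal is correct and follows essentially the same approach as the paper. Both arguments sandwich $B(x,r)$ between two nearby levels of the construction, bound $\eta(B(x,r))$ above by the measure of a bounded number of level-$n$ intervals and below by the measure of a single contained interval, and then invoke \eqref{regualr-growth-1} and \eqref{regular-growth-2} to conclude.

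The only noteworthy differences are cosmetic. The paper works with a single index $n$ determined by $R_{n+1}<r<R_n$ and bounds the number of level-$n$ intervals meeting $B(x,r)$ by the variable quantity $4R_n/r_n$, which becomes negligible after dividing by $\log R_{n+1}$; you instead introduce two indices $n_*(r)$ (via $r_n$) and $N_*(r)$ (via $R_n$) and get the absolute constant $3$ for the intersection count, which is slightly cleaner. Your closing remark about a possibly needed case split is accurate, and the one point to watch in the final ``sweep'' step is that $n_*(r)$ is defined through $r_n$, which is not a priori monotone; however, the paper's proof makes the parallel implicit assumption that $R_n$ is strictly decreasing, so this is not a discrepancy between the two arguments.
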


\begin{proof}
For any $x\in Y$,  $r>0$, there exists a nested sequence of intervals $\{I_{k, l_k(x)}\}_{k=1}^{\infty}$, such that $x\in I_{k, l_k(x)}$ for any $k\in \mathbb{N}$. And  there always exits a unique integer  $n$ such that $R_{n+1}<r<R_{n}$.

We assume that there exists $N=N(x, n, r)$ intervals in $\{I_{n,  j}\}_{j=1}^{m_n}$ which intersect with $B(x, r)$. It follows that
\begin{equation*}
N r_{n}\leq \underset{B(x, r)\cap I_{n, j}\neq \emptyset}\sum |I_{n, j}|\leq 4R_{n}.
\end{equation*}
Thus, we get $N\leq  \frac{4R_n}{r_n}$. Combining the fact that $B(x, r)$ contains at least one element in $\{I_{n+1, j}: 1\leq j\leq m_{n+1}\}$, we have
\begin{equation*}
\frac{\log \eta(B(x,r))}{\log r}\leq \frac{\min_{1\leq j\leq m_{n+1}}\log \eta(I_{n+1, j})}{\log R_n}
\end{equation*}
and
\begin{equation*}
\frac{\log \eta(B(x,r))}{\log r}\geq  \frac{\log R_n-\log r_n}{\log R_{n+1}}+\frac{ \max_{1\leq j\leq m_n}\log \eta(I_{n, j})}{\log R_{n+1}}.
\end{equation*}
By  \eqref{regualr-growth-1} and \eqref{regular-growth-2}, we proved the desired conclusion.
\end{proof}
It seems that the conditions in Lemma \ref{lemma-Moran} are too strong and awkward at the first glance. Indeed they are very useful  especially in estimating the lower bound of the  Hausdroff dimension in the Moran constructions.
\subsection{Moran construction driven by dynamical system}
In many applications, the fundamental  intervals in the different levels in  Moran construction was obtained by dynamical system. In this section, we try to introduce a framework for Moran construction driven by dynamical system.

For each $n\in \mathbb{N}\cup\{0\}$, let $\mathcal{F}_n:=\{\hat{I}_{n,j}: 1\leq j\leq s_n\}$ be a family of $s_n$ disjoint closed intervals in $[0, 1]$ such that
 for each $n\in \mathbb{N}$ there exists $l_n\in \mathbb{N}$ such that $T^{l_n}(\hat{I}_{n, j})\supset \bigcup\limits_{j=1}^{s_{n+1}}\hat{I}_{n+1, j}$
    for any $j\leq s_{n}$.

Set  $n_i=\sum\limits_{j=1}^{i}l_j$ for $i>0$. We denote $\mathcal{I}_{k}$
be the family of sub-intervals such that for each $J\in \mathcal{I}_k$,  $T^{n_k} \  \text{is bijective from} \  J \ \text{to}  \ \hat{I}_{k, j} \ \text{for some} \ j\leq s_{k}$, and $T^{n_l}(J)\in \mathcal{F}_l $  for $0\leq l\leq k$. Indeed, $\mathcal{I}_{k}$ is exactly {\bf the set of all fundamental intervals of level $k$} and $\mathcal{I}=\bigcup_{k=1}^{\infty}\mathcal{I}_k$ is {\bf the set of all fundamental intervals of different levels. }

 It is easy to see that $\{\mathcal{I}_{l}\}_{l=0}^{\infty}$ is a nested sequence of compact set. Denote $\mathcal{Y}:=\bigcap_{n=1}^{\infty}\mathcal{I}_{n}$. Let $\mu_i$ be a sequence of measures on $\mathcal{F}_i$ and $J_n$ be a interval in $\mathcal{I}_n$.  For any $0\leq k\leq n$,  there  exits $t_k=t_k(J_n)\leq s_k$ such that
\begin{equation*}
T^{n_k}(J_n)\subset \hat{I}_{k, t_k}.
\end{equation*}
In other  words, we have
$$J_n=\bigcap_{ k=0}^{n}T^{-n_k}(\hat{I}_{k, t_k}).$$
We can introduce a family of measures $\eta_n$ on $\mathcal{I}_n$ such that
\begin{equation}
\eta_n(J_n)=\prod_{k=0}^{n}\mu_k(\hat{I}_{k, t_k}).
\end{equation}

It is easy to see the family of measures $\eta_n$ are compatible to each other in the following sense,  for any $J_n\in \mathcal{I}_n$,
\begin{equation}
\eta_m(J_n)=\eta_n(J_n)
\end{equation}
for any $m\geq n$. It is  standard to get a measure $\eta$ on $\mathcal{Y}$  by just taking the weak-$*$ limit of $\eta_n$. It follows that
 \begin{equation}
\eta(J_n)=\prod_{k=0}^{n}\mu_k(\hat{I}_{k, t_k}).
\end{equation}
for any $J_n\in \mathcal{I}_n$.

 For any $x\in \mathcal{Y}$, any $k\in \mathbb{N}\bigcup\{0\}$, there exists $J_k=J_k(x)$ in $\mathcal{I}_k$ and $\hat{I}_{k, t_k}=\hat{I}_{k, t_k}(x)$ in $\mathcal{F}_k$ such that
$$ x\in \bigcap\limits_{k=0}^{\infty} J_k$$ and
$T^{n_k}(x)\in \hat{I}_{k, t_k}$.

We can transfer Lemma \ref{lemma-Moran} to the following result, which will be very helpful in the construction of Moran set.
\begin{lemma}\label{lemma-Moran-Piece}
Let $\eta$ satisfies the balanced  property defined in Lemma \ref{lemma-Moran}. If the following asymptotic additive property
\begin{equation}
\lim_{n\rightarrow\infty}\frac{\log \diam J_n(x)}{\sum\limits_{i=0}^{n}\log \diam I_{i, t_i}(x) }=1
\end{equation}
and the tempered growth property
\begin{equation}
\lim_{n\rightarrow\infty}\frac{\log \diam I_{n+1, t_{n+1}(x)} }{\sum\limits_{i=0}^{n+1}\log \diam I_{i, t_i} (x)}=1
\end{equation}
hold,  we have
\begin{equation*}
\begin{split}
\limsup_{r\rightarrow 0}\frac{\log \eta(B(x,r))}{\log r}=
\limsup_{i\rightarrow\infty}\frac{\log \mu_i(I_{i, t_i}(x))}{\log \diam I_{i, t_i}(x)}\\
\liminf_{r\rightarrow 0}\frac{\log \eta(B(x,r))}{\log r}=
\liminf_{i\rightarrow\infty}\frac{\log \mu_i(I_{i, t_i}(x))}{\log \diam I_{i, t_i}(x)}
\end{split}
\end{equation*}
for any $x\in Y$.
\end{lemma}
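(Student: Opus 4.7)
The plan is to derive Lemma \ref{lemma-Moran-Piece} as a direct application of the abstract Lemma \ref{lemma-Moran} to the Moran set $\mathcal{Y}$, the nested family $\{\mathcal{I}_n\}_{n\ge 0}$ of fundamental intervals, and the measure $\eta$ constructed from $\{\mu_k\}$. The strategy has three steps: first, verify the regularity hypotheses \eqref{regualr-growth-1} and \eqref{regular-growth-2} of that lemma from the assumptions made here; second, invoke Lemma \ref{lemma-Moran} to rewrite the local dimension of $\eta$ at $x$ in terms of $\log\eta(J_n(x))/\log\diam J_n(x)$; and third, convert this into the claimed ratio of individual-level quantities $\log\mu_i(\hat I_{i,t_i(x)})/\log\diam \hat I_{i,t_i(x)}$.

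For the first step, the balanced hypothesis on $\eta$ is assumed outright, giving \eqref{regular-growth-2}. The regularity condition \eqref{regualr-growth-1}, concerning the min/max diameters at each level and their asymptotic behavior across levels, follows from combining the asymptotic additive property with the tempered growth property: the additive property gives $\log\diam J_n(x)\sim\sum_{i=0}^n \log\diam \hat I_{i,t_i(x)}$, and the tempered growth then forces $\log\diam J_{n+1}(x)/\log\diam J_n(x)\to 1$, which is the essential content of \eqref{regualr-growth-1} (the comparability of max and min diameters at a fixed level is built into the finite, balanced structure of each cover $\mathcal{F}_n$). Applying Lemma \ref{lemma-Moran} then yields
$$\limsup_{r\to 0}\frac{\log\eta(B(x,r))}{\log r}=\limsup_{n\to\infty}\frac{\log\eta(J_n(x))}{\log\diam J_n(x)}$$
and the analogous $\liminf$ identity.

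For the conversion step, the product structure $\eta(J_n(x))=\prod_{k=0}^n\mu_k(\hat I_{k,t_k(x)})$ gives $\log\eta(J_n(x))=\sum_{i=0}^n\log\mu_i(\hat I_{i,t_i(x)})$, and combining with the asymptotic additivity of $\log\diam J_n(x)$ reduces the problem to showing
$$\limsup_{n\to\infty}\frac{\sum_{i=0}^n a_i}{\sum_{i=0}^n b_i}=\limsup_{i\to\infty}\frac{a_i}{b_i},$$
where $a_i=-\log\mu_i(\hat I_{i,t_i(x)})\ge 0$ and $b_i=-\log\diam \hat I_{i,t_i(x)}>0$, and likewise for $\liminf$. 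One direction is elementary, since $\inf_{i\le n}(a_i/b_i)\le (\sum_{i=0}^n a_i)/(\sum_{i=0}^n b_i)\le \sup_{i\le n}(a_i/b_i)$. The main obstacle is the reverse direction, a Stolz-Cesaro-style statement: picking a subsequence $n_k$ along which $a_{n_k}/b_{n_k}$ approaches $\limsup_i a_i/b_i$, I would use the tempered growth property, which controls how a new term sits relative to the cumulative partial sum, to show that the ratio of partial sums along this subsequence approaches the same limit. The $\liminf$ case is handled symmetrically, completing the argument.
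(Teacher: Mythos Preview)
The paper does not actually supply a proof of this lemma; it is stated and then the applications in Section~5 verify the hypotheses of Lemma~\ref{lemma-Moran} directly rather than invoking Lemma~\ref{lemma-Moran-Piece}. So there is nothing to compare your argument against, and one has to judge it on its own merits.

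Your reduction to Lemma~\ref{lemma-Moran} is the natural move, but there is a genuine gap in the final ``Stolz--Ces\`aro'' step. You assert that, with $a_i=-\log\mu_i(\hat I_{i,t_i(x)})$ and $b_i=-\log\diam\hat I_{i,t_i(x)}$, the tempered growth property forces
\[
\limsup_{n\to\infty}\frac{\sum_{i\le n}a_i}{\sum_{i\le n}b_i}=\limsup_{i\to\infty}\frac{a_i}{b_i},
\qquad
\liminf_{n\to\infty}\frac{\sum_{i\le n}a_i}{\sum_{i\le n}b_i}=\liminf_{i\to\infty}\frac{a_i}{b_i}.
\]
Under the reading of ``tempered growth'' that makes your first step go through---namely $b_{n+1}/\sum_{i\le n+1}b_i\to 0$, so that $\sum_{i\le n+1}b_i/\sum_{i\le n}b_i\to 1$ and hence $\log r_{n+1}/\log r_n\to 1$---this identity is simply false. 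Take $b_i\equiv 1$ and $a_i=1+(-1)^i$: then $\limsup a_i/b_i=2$, $\liminf a_i/b_i=0$, while $\sum_{i\le n}a_i/\sum_{i\le n}b_i\to 1$. Your proposed mechanism (``pick $n_k$ with $a_{n_k}/b_{n_k}\to\limsup$, then tempered growth makes the partial-sum ratio follow'') fails precisely because tempered growth in this sense says each new term is \emph{negligible} relative to the accumulated sum, so a single large $a_{n_k}/b_{n_k}$ cannot move the ratio of partial sums.

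If instead you take the tempered growth condition literally as written in the lemma (limit equal to $1$, i.e.\ the last term dominates the whole sum), then your first step breaks: one gets $\sum_{i\le n+1}b_i/\sum_{i\le n}b_i\to\infty$, so $\log r_{n+1}/\log r_n\not\to 1$ and the hypotheses of Lemma~\ref{lemma-Moran} are not met. Either way the argument as written does not close. (There is also a smaller issue you gloss over: the additive and tempered hypotheses are pointwise in $x$, whereas the conditions $\log R_n/\log r_n\to 1$ and $\log r_{n+1}/\log r_n\to 1$ in Lemma~\ref{lemma-Moran} are global; some uniformity is needed to pass from one to the other.)
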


The asymptotic additive property and tempered growth property  are crucial in the construction of Moran set and Moran measure which is a Bernoulli-like measure on the Moran set.  By Lemma \ref{lemma-Moran-Piece}, we only need to gluing a sequence of  subsystems carefully in the construction.

\subsection{Preparation of geometric  Moran construction}
In order to prove Theorem \ref{theorem-main}, we need to introduce two Moran sets, while the constructions in the both of the Moran sets share some similar steps in the initial constructions.

We first try to deal with the common steps in the two constructions a hyperbolic measure $\mu$. In the first case, we will set $\mu=m$ in Section \ref{sect-first-Moran-set}, where $m$ is the given hyperbolic measure in Theorem 1. In the second case, $\mu$ is an arbitrary given hyperbolic measure in Section \ref{sect-second-Moran-set}.
It is very convenient to transfer most of our discussion in the symbolic space.
Let $\{\epsilon_i\}_{i=1}^{\infty}$ be a decreasing sequence  such that $\lim\limits_{i\rightarrow\infty}\epsilon_i=0$.  Recall that $f$ and $g=\log |T'|$ are uniformly continuous on $\Lambda$,  there exists $k_i\in \mathbb{N}$   such that
\begin{equation}\label{var}
\begin{cases}
\underset{1\leq j\leq i}{\var_n} A_n f_j\circ \pi<\epsilon_i, \quad
\var_n A_n g\circ \pi<\epsilon_i,\\
 |-\frac{\log D_n(\omega)}{n}-A_n g(\pi(\omega))|<\epsilon_i \ (\text{for any} \ \omega\in\Sigma)
\end{cases}
\end{equation}
for any $n\geq k_i$.
Noting that $\mu$ is an ergodic measure, we have for $\mu$ a.e $\omega$,
\begin{equation}\label{block-1}
\begin{cases}
A_n f_j (\pi (\omega))\to \int f_j\circ \pi d \mu \ \text{for} \  1\leq j\leq  i,  \\
A_n g(\pi(\omega))\to \lambda(\mu,\sigma),  \\
-\frac{\log \mu[\omega|_n]}{n}\to h(\mu,\sigma).
\end{cases}
\end{equation}
 by Birkhoff's ergodic theorem and Shannon-Mcmillan-Breiman's theorem.
  For any $\delta>0$,  there exists a compact set $\Omega'(i)\subset \Sigma$ such that
  \begin{equation}
  \mu(\Omega'(i))>1-\delta
  \end{equation} and \eqref{block-1}
 holds uniformly on $\Omega'(i)$ by Egorov's theorem. Then there  exists   $m_i\geq k_i$ such that  for any  $n\geq m_i$  and any $\omega\in\Omega'(i)$, we
have
\begin{equation}\label{estimation}
\begin{cases}
\underset{1\leq j\leq i}\sup|A_n f_{j}(\pi(\omega))- \int f_{j} d \mu|<\epsilon_i, \\
|A_n g(\pi(\omega))- \lambda(\mu,\sigma)|< \epsilon_i, \\
|-\frac{\log\mu[\omega|_n]}{n} - h(\mu,\sigma)|< \epsilon_i,
\end{cases}
\end{equation}

In this way, we get a good block of  length at least $m_i$ with nice statistical behaviour.
A very naive idea is to product the blocks we have got in each step to construct the Moran set and introduce a product measure on itself. While, this construction is not really good since it is possible that
$$
\limsup\limits_{i\rightarrow\infty}
\frac{m_{i+1}}{m_{i}}=\infty,
$$
 and this makes it difficult to study the statistical analysis of the points in the Moran set.  To overcome this difficulty, we  introduce a new sequence with  very slow growth and  the corresponding sequence of blocks   still have well controlled statistical behaviors.

 We can assume that $m_{i+1}\geq 2 m_i$.
 The key point in the construction of Moran set is to construct the a suitable sequence $\{l_{i, j}:0\leq j\leq p_i, i\in \mathbb{N}\}$ such that
\begin{enumerate}
\item $p_{i}=m_{i+1}-m_{i}-1$;
\item $l_{i, j}=m_{i}+j$.
\end{enumerate}

\noindent It is a simple observation that $l_{i, p_i}=m_{i+1}-1$ and
\begin{equation}\label{slow-growth}
\lim_{i\rightarrow\infty}\sup\limits_{0\leq j< p_i}\frac{l_{i, j+1}}{l_{i, j}}=1.
\end{equation}
\section{Proof for the lower bound}
In this section, we will prove the lower bound for Theorem \ref{theorem-main}. And we will separate the proof into two cases.

\subsection{The construction of  the first Moran set}\label{sect-first-Moran-set}
Let $m$ be a hyperbolic measure.
Let $$
\Sigma(i,  j)=\{\omega_1\omega_2\dots\omega_{l_{i,j}}\ |\ \omega=(\omega_n)_{n=1}^{\infty}\in \Omega'(i)\}
$$
and  $${\Omega}(i, j)=\{\omega\in \Sigma: \omega_1\omega_2\dots\omega_{l_{i, j}}\in \Sigma(i, j)\}.$$

Now, we will use the framework discussed in {\bf Section} 4.3 and $\mu=m$. By the construction in {\bf Section 4.3}, we have
$$
\mu({\Omega}(i, j))\ge \mu(\Omega^\prime(i))\geq 1-\delta
$$
 for $1\leq j\leq p_{i}$.

We define the concatenation of $\Sigma(i, 0), \Sigma(i, 1) \dots$, and $\Sigma(i, p_i)$ as follows.
$$\overset{p_i}{\underset{j=0}{\prod}}\Sigma(i,  j):=\left\{\omega_1\omega_2\dots\omega_{l_{i, 0}}\omega_1\omega_2\dots\omega_{l_{i, 1}},\dots, \omega_1\omega_2\dots\omega_{l_{i, p_i}}\right\}.$$
Similarly, we define the  geometric Moran construction in the following,
$$
M:=\overset{\infty}{\underset{i=1}{\prod}}
\overset{p_i}{\underset{j=0}{\prod}}\Sigma(i,  j).
$$
(See figure 2 for the illustration of the construction of the Moran set.)
\begin{figure}
\begin{center}
\begin{tikzpicture}[scale=2.5]
    \draw[fill=blue!25] (0,0) rectangle (1,0.2);
    \draw[fill=blue!25] (1,0) rectangle (2,0.2);
    \draw[fill=blue!25] (3,0) rectangle (4,0.2);
    \node[above]at(2.5,0){$\dots$};
    \node at(0.5,0.1){$\Sigma(l,0)$};
    \node at(1.5,0.1){$\Sigma(1,1)$};
    \node at(3.5,0.1){$\Sigma(1,p_1)$};

    \draw[fill=blue!25] (0,-0.9) rectangle (1,-0.7);
    \draw[fill=blue!25] (1,-0.9) rectangle (2,-0.7);
    \draw[fill=blue!25] (3,-0.9) rectangle (4,-0.7);
    \node[above]at(2.5,-0.9){$\dots$};
    \node at(0.5,-0.8){$\Sigma(2,0)$};
    \node at(1.5,-0.8){$\Sigma(2,1)$};
    \node at(3.5,-0.8){$\Sigma(2,p_2)$};

    \draw[fill=blue!25] (0,-2.1) rectangle (1,-1.9);
    \draw[fill=blue!25] (1,-2.1) rectangle (2,-1.9);
    \draw[fill=blue!25] (3,-2.1) rectangle (4,-1.9);
    \node[above]at(2.5,-2.1){$\dots$};
    \node at(0.5,-2.0){$\Sigma(k,0)$};
    \node at(1.5,-2.0){$\Sigma(k,1)$};
    \node at(3.5,-2.0){$\Sigma(k,p_k)$};

    \draw[-stealth,rounded corners] (4,0.1) --(4.2,0.1)-- (4.2,-0.35)--(-0.2,-0.35)--(-0.2,-0.8)-- (0,-0.8);

    \draw[rounded corners] (4,-0.8)-- (4.2,-0.8)--(4.2,-1.05);
    \draw[-stealth,rounded corners] (4.2,-1.4)--(4.2,-1.55)-- (-0.2,-1.55)--(-0.2,-2.0)-- (0,-2.0);

    \node at(4.2,-1.2){$\vdots$};
    \draw[-stealth,rounded corners] (4,-2.0) --(4.2,-2.0)-- (4.2,-2.45)--(-0.2,-2.45)--(-0.2,-2.9)-- (0,-2.9);
    \node[above]at(0.2,-2.9){$\dots$};

      \draw[decorate,decoration={brace,mirror}] (0,0) -- (1,0) node[midway,below]{$l_{10}$};
      \draw[decorate,decoration={brace,mirror}] (1,0)-- (2,0) node[midway,below]{$l_{11}$};
      \draw[decorate,decoration={brace,mirror}] (3,0) -- (4,0) node[midway,below]{$l_{1p_1}$};

      \draw[decorate,decoration={brace,mirror}] (0,-0.9) -- (1,-0.9) node[midway,below]{$l_{20}$};
      \draw[decorate,decoration={brace,mirror}] (1,-0.9)-- (2,-0.9) node[midway,below]{$l_{21}$};
      \draw[decorate,decoration={brace,mirror}] (3,-0.9) -- (4,-0.9) node[midway,below]{$l_{2p_2}$};

    \draw[decorate,decoration={brace,mirror}] (0,-2.1) -- (1,-2.1) node[midway,below]{$l_{k0}$};
      \draw[decorate,decoration={brace,mirror}] (1,-2.1)-- (2,-2.1) node[midway,below]{$l_{k1}$};
      \draw[decorate,decoration={brace,mirror}] (3,-2.1) -- (4,-2.1) node[midway,below]{$l_{kp_k}$};
\end{tikzpicture}
\end{center}
\caption{Construction of the Moran set}
\end{figure}
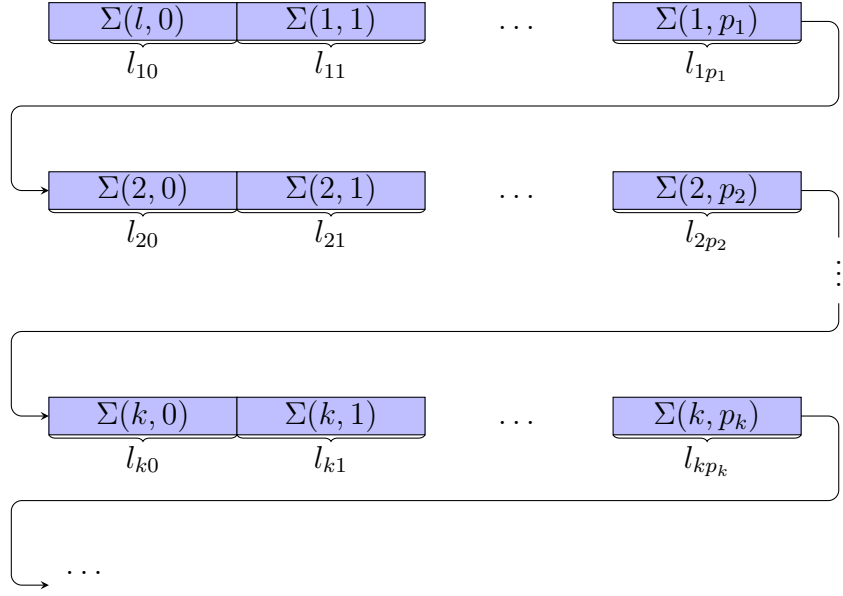
We  relabel the  sequences $\{l_{i, j}:0\leq j\leq  p_i, i\in \mathbb{N}\}$ and $\{\Sigma (i, j):0\leq j\leq  p_i, i\in \mathbb{N}\}$
 by $\{l_{i}^{*}\}_{i=1}^{\infty}$ and  $\{\Sigma^{*}(i)\}_{i=1}^{\infty}$ for  convenience. Correspondingly, we also use the notations $\{\Omega_{i}^{*}\}$ and $\{\epsilon_{i}^{*}\}_{i=1}^{\infty}$.
 It follows from \eqref{slow-growth}  that
\[
\lim_{i\rightarrow\infty}\frac{l_{i+1}^{*}}{l_{i}^{*}}=1.
\]
By the Stolz's theorem, we have
\begin{equation}\label{slow-growth-3}
\lim\limits_{n\rightarrow\infty}\frac{l_{1}^{*}+l_{2}^{*}
+\dots l_{n+1}^{*}}{l_{1}^{*}+l_{2}^{*}+\dots l_{n}^{*}}=1.
\end{equation}

\begin{lemma}\label{lemma-level}
For any $i\in \mathbb{N}$ and $\omega\in M$, we have $$\lim\limits_{n\rightarrow\infty} A_n f_i(\pi(\omega))=\int f_i d \mu.$$
\end{lemma}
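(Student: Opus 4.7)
The plan is to exploit the block structure of $M$ together with the uniform statistical estimates \eqref{estimation} on each block. Fix $i\in\mathbb{N}$ and $\omega\in M$. By construction $\omega=\omega^{(1)}\omega^{(2)}\cdots$, where $\omega^{(k)}\in\Sigma^{*}(k)$ has length $l_{k}^{*}$. Set $L_{N}=l_{1}^{*}+\cdots+l_{N}^{*}$. Since $\pi\circ\sigma=T\circ\pi$, the Birkhoff sum decomposes cleanly as
$$
S_{L_{N}}f_{i}(\pi(\omega))\;=\;\sum_{k=1}^{N}S_{l_{k}^{*}}f_{i}\bigl(\pi(\sigma^{L_{k-1}}\omega)\bigr),
$$
so the task reduces to showing that each summand is approximately $l_{k}^{*}\int f_{i}\,d\mu$ and then handling the incomplete last block.

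For the per-block estimate, I would argue as follows. The relabeling identifies $\Sigma^{*}(k)$ with some $\Sigma(I(k),j(k))$, where $I(k)\to\infty$ and $l_{k}^{*}\geq m_{I(k)}\to\infty$ as $k\to\infty$. Choose any $\tilde\omega^{(k)}\in\Omega^{*}(k)=\Omega(I(k),j(k))$ whose first $l_{k}^{*}$ symbols coincide with $\omega^{(k)}$; such a sequence exists by the definition of $\Sigma(I(k),j(k))$. Once $k$ is large enough that $I(k)\geq i$ and $l_{k}^{*}\geq k_{I(k)}$, the variation bound \eqref{var} applied to $A_{l_{k}^{*}}f_{i}\circ\pi$, combined with the pointwise estimate \eqref{estimation} at $\tilde\omega^{(k)}$, yields
$$
\Bigl|A_{l_{k}^{*}}f_{i}\bigl(\pi(\sigma^{L_{k-1}}\omega)\bigr)-\int f_{i}\,d\mu\Bigr|\;\leq\;2\epsilon_{I(k)}^{*},
$$
and the right-hand side tends to $0$ as $k\to\infty$.

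To conclude, I would assemble the block contributions via a Stolz/Ces\`aro type argument. Let $\eta_{k}$ denote the above per-block error, with a crude bound $2\|f_{i}\|$ for the finitely many initial exceptional blocks. Then
$$
\Bigl|S_{L_{N}}f_{i}(\pi(\omega))-L_{N}\int f_{i}\,d\mu\Bigr|\;\leq\;\sum_{k=1}^{N}\eta_{k}\,l_{k}^{*},
$$
and since $\eta_{k}\to 0$ while the weights $l_{k}^{*}$ are positive with $L_{N}\to\infty$, Stolz's theorem gives $L_{N}^{-1}\sum_{k=1}^{N}\eta_{k}l_{k}^{*}\to 0$. For arbitrary $n$ with $L_{N-1}\leq n<L_{N}$, write $S_{n}f_{i}(\pi(\omega))=S_{L_{N-1}}f_{i}(\pi(\omega))+R_{n}$ with $|R_{n}|\leq l_{N}^{*}\|f_{i}\|$, and invoke \eqref{slow-growth-3}, i.e.\ $L_{N}/L_{N-1}\to 1$, to pass from the subsequence $\{L_N\}$ to general $n$ and conclude that $A_{n}f_{i}(\pi(\omega))\to\int f_{i}\,d\mu$.

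The only real technical point is the bookkeeping behind the relabeling: one must verify that $I(k)\to\infty$ and that the block lengths $l_{k}^{*}=m_{I(k)}+j(k)$ grow fast enough so that both \eqref{var} and \eqref{estimation} become applicable to $f_{i}$ on every sufficiently late block. Once this is verified, the rest is standard Ces\`aro-type estimation, facilitated by the slow-growth property \eqref{slow-growth-3} that was precisely the motivation for introducing the refined sequence $\{l_{k}^{*}\}$.
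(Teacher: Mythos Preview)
Your proposal is correct and follows essentially the same approach as the paper: decompose the Birkhoff sum along the block endpoints $L_N=n_q=\sum_{j\le q}l_j^*$, establish convergence along this subsequence via the per-block estimates \eqref{var} and \eqref{estimation}, and then pass to general $n$ using the slow-growth property \eqref{slow-growth-3}. In fact you supply the details of the subsequence step that the paper simply declares ``easy to see''; the only minor imprecision is that $\tilde\omega^{(k)}$ should be chosen in $\Omega'(I(k))\subset\Omega^{*}(k)$ (rather than merely in $\Omega^{*}(k)$) so that \eqref{estimation} applies directly, but this is exactly what the definition of $\Sigma(I(k),j(k))$ provides.
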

\noindent {\bf Proof of Lemma \ref{lemma-level}}\
We  denote $n_{q}=\sum_{j=1}^{q}l_{j}^{*}$ for  any $q\in \mathbb{N}$. By \eqref{slow-growth-3}, we have
$$\lim\limits_{q\rightarrow\infty}\frac{n_{q+1}}{n_{q}}=1.$$
It is easy to see that
\[
\lim_{q\rightarrow\infty} A_{n_q}f_i(\pi(\omega))=\int f_i d \mu
\]
for each $i\in \mathbb{N}$.
For  $n_q\leq n<n_{q+1}$, we have
\begin{align*}
&\frac{1}{n}\bigg|S_n f_i(\pi(\omega))-n\int f_i d \mu\bigg|\\
\leq &
\frac{1}{n}\bigg|S_{n_q} f_i(\pi(\omega))-n_q\int f_i d \mu\bigg|+\frac{1}{n}\bigg|S_{n-n_q}f_i(\sigma^{n-n_q}\pi(\omega))
-(n-n_q)\int f_i d \mu\bigg|\\
\leq & \frac{1}{n}\bigg|S_{n_q} f_i(\pi(\omega))-n_q\int f_i d \mu\bigg|+\frac{2(n-n_q)\|f_i\|}{n}\\
\leq  & \frac{1}{n_q}\bigg|S_{n_q} f_i(\pi(\omega))-n_q\int f_i d \mu\bigg|+\frac{2(n_{q+1}-n_{q})\|f_i\|}{n_{q}}.
\end{align*}
Taking $n$ goes to infinity, we complete the proof of Lemma \ref{lemma-level}.
\hfill $\square$

 By Lemma \ref{lemma-level}, it is easy to check that $\pi (M)\subset \Lambda_{\mu}$.  Now, we will construct a probability measure $\eta$ on $M$, and we call it Moran measure. For each $w\in \Sigma^*(i)$, we define
\begin{equation}\label{construction-measure-1}
\rho^i_w:=\frac{\mu([w])}{\mu (\Omega^*(i))}.
\end{equation}
It is seen that $\sum_{w\in \Sigma^*(i)}\rho^i_w=1.$
Let
$
{\mathcal C}_n:=\{[w]:w\in \prod_{i=1}^n \Sigma^\ast(i)\}.
$ For each $w=w_1\cdots w_n\in {\mathcal C}_n$,  we define
\begin{equation}\label{construction-measure-2}
\nu([w]):=\prod_{i=1}^n \rho^i_{w_i}.
\end{equation}
 We still denote the extension to Borel $\sigma$-algebra $\sigma({\mathcal C}_n: n\ge 1)$ of $M$ by $\nu$. Let $\eta=\pi_{*}\nu$ and $\{I_{n, j}: 1\leq j\leq k_n\}=\pi (\prod\limits_{i=1}^{n}\Sigma^{*}(i))$.

We will split the remaining part of the proof into two steps. In this first step, we are going to prove the balanced property for $\eta$. For each $I_{n, j}$, there exists $\omega=\omega_1\omega_2\dots \omega_n\in \mathcal{C}_n$ such that $\pi(\omega)=I_{n, j}$. By the definition of $\eta$, we know that $\eta(I_{n, j})=\prod\limits_{i=1}^{n} \frac{\mu([w_i])}{\mu (\Omega^*(i))}$.

It follows from  \eqref{estimation}, we have
\begin{equation*}\label{entropy-moran-measure}
-\sum_{i=1}^{n}l_i(h(\mu, \sigma)+\epsilon_i)\leq \log \eta(I_{n, j})
\leq -n\log(1-\delta)-\sum_{i=1}^{n}l_i(h(\mu,\sigma)-\epsilon_i).
\end{equation*}
Noting that the both sides of the above inequality are independent of the  index $j$, we obtain
\begin{equation*}
\frac{\min\limits_{1\leq i\leq k_n} \log \eta(I_{n, i})}{\max\limits_{1\leq i\leq k_n}\log \eta(I_{n, i})}\geq \frac{ n\log(1-\delta)+\sum_{i=1}^{n}l_i(h(\mu, \sigma)-\epsilon^*_i)}
{\sum_{i=1}^{n}l_i(h(\mu, \sigma)+\epsilon^*_i)}
\end{equation*}
and
\begin{equation*}
\frac{\min\limits_{1\leq i\leq k_n} \log \eta(I_{n, i})}{\max\limits_{1\leq i\leq k_n}\log \eta(I_{n, i})}\leq \frac{\sum_{i=1}^{n} l_i(h(\mu, \sigma)+\epsilon^*_i)}
{n\log(1-\delta)-\sum_{i=1}^{n}l_i(h(\mu, \sigma)-\epsilon_{i}^{*})}.
\end{equation*}
This proves the balanced property for $\eta$ as $n$ goes to infinity.

In the second step,  we will  show that the fundamental intervals $\{I_{n, k}: 1\leq  k\leq k_n, n\in \mathbb{N}\}$ satisfy the tempered growth property. By Lemma \ref{lemma-growth-1} we can get
\begin{align*}
\log \diam I_{n, k}&=\log \diam D_{\sum_{j=1}^{n}l_{j}^{*}}(\omega)\\
&\leq -(S_{\sum_{j=1}^{n}l_{j}^{*}}g(\pi(\omega))-\sum_{j=1}^{n}
l^{*}_j\epsilon^*_n)\\
&= -\sum_{j=1}^{n}S_{l^*_j}g(\pi(\sigma^{l^*_1+\dots l^*_{j-1}}\omega))+\sum_{j=1}^{n}
l^{*}_j\epsilon^*_n\\
&\leq -\sum_{j=1}^{n}
l^{*}_j\lambda(\mu, \sigma)+\sum_{j=1}^{n}
l^{*}_j(\epsilon^*_n+\epsilon^*_j).
\end{align*}
Thus, we have
\begin{equation}\label{diameter-upper-bound}
\log R_n\leq -\sum_{j=1}^{n}
l^{*}_j\lambda(\mu, \sigma)+\sum_{j=1}^{n}
l^{*}_j(\epsilon^*_n+\epsilon^*_j)
\end{equation}
and
\begin{equation}\label{diameter-lower-bound}
\log r_n\geq -\sum_{j=1}^{n}
l^{*}_j\lambda(\mu, \sigma)-\sum_{j=1}^{n}
l^{*}_j(\epsilon^*_n+\epsilon^*_j).
\end{equation}
By the Stolz's theorem, it is not difficult to see that
\begin{equation}
\begin{split}
\lim_{n\rightarrow \infty} \frac{\log R_{n}}{\log r_n}=1 \quad \text{and} \
\lim_{n\rightarrow \infty} \frac{\log r_{n+1}}{\log r_n}=1.
\end{split}
\end{equation}
It follows from  \eqref{entropy-moran-measure}, \eqref{diameter-upper-bound} and \eqref{diameter-lower-bound} that
\[
\lim_{n\rightarrow\infty}\max_{j}\Big|\frac{\log \eta(I_{n, j})}{\log \diam (I_{n, j})}-\frac{h(\mu, \sigma)}{\lambda(\mu, \sigma)}\Big|=0.
\]
By Lemma \ref{lemma-Moran}, we have
\[
\lim_{r\rightarrow 0}\frac{\log \eta(B(x,r))}{\log r}=\frac{h(\mu, \sigma)}{\lambda(\mu, \sigma)}
\]
for any $x\in \pi(M)$.
Recall that $\mu=m$, we obtain
\begin{equation}\label{eq-lowerbound-1}
\dim_{H}\Lambda_{m}\geq \dim_{H}\pi(M)\geq \frac{h(m, \sigma)}{\lambda(m, \sigma)}.
\end{equation}
Noting that $\dim_{H}\Lambda_{m}\leq \dim_{H}\tilde{\Lambda}_m$, we have
$$\dim_{H}\Lambda_{m}=\dim_{H}\tilde{\Lambda}_m=\frac{h(m, \sigma)}{\lambda(m, \sigma)}.$$
This also recover first part of Theorem B in our setting.

\subsection{The construction of second Moran set}\label{sect-second-Moran-set}
In order to prove the lower bound of the second part of Theorem \ref{theorem-main}, we need to make some modifications  of the Moran set $M$ constructed above.  Let $\mu$ be arbitrary hyperbolic measure.
Since $m$ is a parabolic ergodic measure, we assume that $m$
supports  on $\pi(1^{\infty})$ without loss of generality. There exists a sequence $\{k_i\}_{i=1}^{\infty}$
such that
\[
\lim_{i\rightarrow\infty}k_i=\infty, \quad \lim_{i\rightarrow\infty}\frac{k_{i+1}}{k_i}=1, \quad \lim_{i\rightarrow\infty}k_i\epsilon_i=0.
\]
Let $k_{i, j}=k_i$ for $1\leq j\leq p_i$. Denote
$$
\Sigma(i, j)=\{\omega_1\omega_2\dots\omega_{l_{i,j}}1^{k_{i,j}l_{i,j}}\ |\ \omega\in \Omega'(i)\}
$$
and  $${\Omega}(i, j)=\{w:\omega_1\omega_2\dots\omega_{l_{i,j}}1^{k_{i,j}l_{i,j}}\in \Sigma(i, j)\}.$$
Next, we still use the framework discussed in {\bf Section} 4.3 and by the construction in {\bf Section 4.3}, we have
$$
\mu({\Omega}(i, j))\ge \mu(\Omega^\prime(i)\geq 1-\delta.
$$   We define a  new Moran set in the following,
$$
\hat{M}:=\overset{\infty}{\underset{i=1}{\prod}}
\overset{p_i}{\underset{j=0}{\prod}}\Sigma(i, j)
$$
and relabel the  sequences $\{l_{i, j}: 0\leq j\leq p_i, i\in \mathbb{N}\}$ and $\{\Sigma(i, j): 0\leq j\leq p_i, i\in \mathbb{N}\}$
 by $\{l_{i}^{**}\}_{i=1}^{\infty}$ and  $\{\Sigma^{**}(i)\}_{i=1}^{\infty}$. Analogously, we also take the relabelling sequences $\{\Omega^{**}(i)\}_{i=1}^{\infty}$, $\{k_{i}^{**}\}_{i=1}^{\infty}$ and $\{\epsilon_{i}^{**}\}_{i=1}^{\infty}$. For each  $\hat{w}=w 1^{k_{i}^{**}l_{i}^{**}}\in \Sigma^{**}(i)$, we define a probability measure on $\Sigma^{**}(i)$ by
\begin{equation}\label{construction-measure-1}
\rho^i_{\hat{w}}:=\frac{\mu([w 1^{k_{i}^{**}l_{i}^{**}}])}{\mu (\Omega^{**}(i))}.
\end{equation}
Denote
$
{\mathcal C}_n:=\{[w]:w\in \prod_{i=1}^n \Sigma^{**}(i)\}.
$ For each $w=w_1\cdots w_n\in {\mathcal C}_n$, we define
\begin{equation}\label{construction-measure-2}
\hat{\nu}([w]):=\prod_{i=1}^n \rho^i_{w_i}.
\end{equation}
 This measure can be uniquely extended to  $\hat{M}$ and we still denote it by $\hat{\nu}$.  Let  $\{\hat{I}_{n, j}: 1\leq j\leq k_n\}=\pi (\prod\limits_{i=1}^{n}\Sigma^{**}(i))$ and denote
 $\hat{\eta}=\pi_{*}\hat{\nu}$. We can
 get the following estimates,
\begin{equation*}
\begin{split}
-\sum_{i=1}^{n}l_{i}^{**}(h(\mu, \sigma)
+\epsilon^{**}_{i})
\leq \log \hat{\eta}(\hat{I}_{n, k})
\leq  -\sum_{i=1}^{n}l_{i}^{**}(h(\mu, \sigma)
-\epsilon^{**}_{i}),\\
\log \diam(\hat{I}_{n, k})\leq -
\sum_{j=1}^{n}l_{j}^{**}\lambda(\mu,
\sigma)+\sum_{j=1}^{n}l_{j}^{**}(1+k_{j}^{**})
\epsilon^{**}_{j},\\
\log \diam(\hat{I}_{n, k})\geq
-\sum_{j=1}^{n}l_{j}^{**}\lambda(\mu,
\sigma)-\sum_{j=1}^{n}l_{j}^{**}
(1+k_{j}^{**})\epsilon^{**}_{j}
\end{split}
\end{equation*}
for any $1\leq k\leq k_n$. It is almost the same to prove that $\pi(\hat{M})\subset \Lambda_{m}$ and
$\dim_{H}\pi(\hat{M})\geq \frac{h(\mu, T)}{\lambda(\mu, T)}$. Noting that $\mu$ is arbitrary, we have
\begin{equation}
\dim_{H}\Lambda_m\geq \sup_{\mu}\left\{\frac{h(\mu, T)}{\lambda(\mu, T)}: \lambda(\mu, T)>0, \mu \ \text{is ergodic}\right\}.
\end{equation}
We need the following lemma about the hyperbolic dimension of $\Lambda$.
\begin{lemma}[\cite{Urbanski1996}]\label{lemma-hyperbolic-dimension}
Under the setting above,  we have
\begin{equation}
\dim_{H}^{hyp}\Lambda=\sup_{\mu}\left\{\frac{h(\mu, T)}{\lambda(\mu, T)}: \lambda(\mu, T)>0, \mu \ \text{is ergodic}\right\}.
\end{equation}
\end{lemma}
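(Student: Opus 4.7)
The plan is to prove the two inequalities separately: the upper bound on $\dim_H^{hyp}\Lambda$ follows from classical conformal repeller theory, while the lower bound requires approximating an arbitrary hyperbolic ergodic measure by genuinely uniformly hyperbolic invariant subsets of $\Lambda$.

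For the inequality $\dim_H^{hyp}\Lambda \leq \sup\{h(\mu,T)/\lambda(\mu,T)\}$, I would take any compact $T$-invariant hyperbolic set $K\subset \Lambda$. By compactness of $K$ and continuity of $|T'|$, the hyperbolicity hypothesis $|T'(z)|>1$ on $K$ upgrades to a uniform expansion constant $c=\min_{z\in K}|T'(z)|>1$, so $(K,T|_K)$ is a piecewise $C^1$ conformal expanding repeller. Classical Bowen--Ruelle theory for such repellers then gives the variational identity
\[
\dim_H K \;=\; \sup\left\{\frac{h(\nu, T)}{\lambda(\nu, T)} : \nu\in\mathcal{E}(K,T)\right\},
\]
and every ergodic $\nu$ supported on $K$ automatically satisfies $\lambda(\nu,T)\geq \log c>0$. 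Each such $\nu$ extends to an ergodic element of $\mathcal{E}(\Lambda,T)$ with positive Lyapunov exponent, so the right-hand side above is dominated by the supremum in the statement. Taking the supremum over all hyperbolic $K$ finishes the direction.

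For the reverse inequality I would fix an ergodic $\mu$ on $\Lambda$ with $\lambda(\mu,T)>0$ and, for each $\varepsilon>0$, construct a compact $T$-invariant hyperbolic subset $K_\varepsilon\subset\Lambda$ with $\dim_H K_\varepsilon \geq h(\mu,T)/\lambda(\mu,T)-\varepsilon$. The cleanest route is a Katok-type horseshoe approximation: since $\mu$ is hyperbolic, one can build a topologically mixing compact hyperbolic set $K_\varepsilon$ on which $T$ acts as a uniformly expanding conformal system with $h_{top}(T|_{K_\varepsilon})\geq h(\mu,T)-\varepsilon$ and $\sup_{z\in K_\varepsilon}\log|T'(z)|\leq \lambda(\mu,T)+\varepsilon$. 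Applying Bowen's formula to $K_\varepsilon$ as in the first direction yields
\[
\dim_H K_\varepsilon \;\geq\; \frac{h_{top}(T|_{K_\varepsilon})}{\sup_{z\in K_\varepsilon}\log|T'(z)|} \;\geq\; \frac{h(\mu,T)-\varepsilon}{\lambda(\mu,T)+\varepsilon},
\]
and letting $\varepsilon\to 0$ gives the lemma. An alternative avoiding Katok's theorem is to recycle the Moran construction $M$ of Section~\ref{sect-first-Moran-set}: the image $\pi(M)$ already has Hausdorff dimension $h(\mu,T)/\lambda(\mu,T)$ and, by construction of the good blocks $\Omega'(i)$, lies in a region where Birkhoff averages of $g=\log|T'|$ stay close to $\lambda(\mu,T)>0$, so a uniformly expanding $T$-invariant compact subset can be extracted.

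The principal obstacle is the \emph{invariance} step in the lower bound: the Moran set $\pi(M)$ is tailor-made to sit inside $\Lambda_\mu$ but is not itself $T$-invariant, and the ambient dynamics is only non-uniformly hyperbolic because of the parabolic fixed points. One must therefore pass to a compact $T$-invariant subset while keeping $|T'|$ uniformly bounded away from $1$, which forces the construction to stay at a positive distance from the parabolic fixed points and to respect the concatenation of the good blocks. The Katok horseshoe theorem handles both issues uniformly; in the one-dimensional setting considered here the specification-like property of $T$ away from the parabolic fixed points provides a more elementary, but essentially equivalent, workaround.
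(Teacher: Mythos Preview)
The paper does not prove this lemma at all: it is quoted as a known result from Urba\'nski's \emph{Parabolic Cantor sets} and used as a black box. So there is no ``paper's own proof'' to compare against; your sketch is an attempt to supply what the paper deliberately outsources.

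Your upper-bound direction is sound in spirit. One small overclaim: for an \emph{arbitrary} compact $T$-invariant hyperbolic $K$ you do not automatically have the full Bowen--Ruelle identity $\dim_H K=\sup_\nu h(\nu)/\lambda(\nu)$, since $K$ need not be a mixing conformal repeller. What you actually need, however, is only the inequality $\dim_H K\le \sup_\nu h(\nu)/\lambda(\nu)$, and that does follow from the standard covering/pressure argument using only uniform expansion on $K$.

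The lower-bound argument has a genuine gap. Katok's horseshoe does \emph{not} give you $\sup_{z\in K_\varepsilon}\log|T'(z)|\le \lambda(\mu,T)+\varepsilon$: it controls Lyapunov exponents (time averages of $\log|T'|$) of invariant measures on $K_\varepsilon$, not the pointwise derivative. Points of $K_\varepsilon$ can, and typically do, visit regions where $|T'|$ is much larger than $e^{\lambda(\mu)+\varepsilon}$. Consequently the displayed inequality $\dim_H K_\varepsilon\ge h_{top}(K_\varepsilon)/\sup_{K_\varepsilon}\log|T'|$ is both unproved and generally too weak. The easy repair is to drop that pointwise estimate entirely: Katok's theorem furnishes a uniformly hyperbolic $K_\varepsilon$ supporting an ergodic measure $\nu$ with $h(\nu,T)\ge h(\mu,T)-\varepsilon$ and $|\lambda(\nu,T)-\lambda(\mu,T)|\le\varepsilon$, and then the variational inequality $\dim_H K_\varepsilon\ge h(\nu,T)/\lambda(\nu,T)$ on the conformal repeller $K_\varepsilon$ gives what you want directly. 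Your Moran-set alternative has the problem you yourself flag---$\pi(M)$ is not $T$-invariant---and the phrase ``a uniformly expanding $T$-invariant compact subset can be extracted'' hides essentially the same Katok-type construction you were trying to avoid.
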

By Lemma \ref{lemma-hyperbolic-dimension}, we obtain $\dim_{H}\Lambda_m\geq \dim_{H}^{hyp}\Lambda$. It is proved in \cite{GR20092} if $T$ is a $C^2$ map, we have
\[
\dim_{H}\Lambda=\sup_{\mu}\left\{\frac{h(\mu, T)}{\lambda(\mu, T)}: \lambda(\mu, T)>0, \mu \ \text{is ergodic}\right\}.
\]
  Hence, we get
$$\dim_{H}\Lambda_m=\dim_{H}\Lambda$$
if $T$ is a $C^2$ map. This recovers the second part of Theorem B.
Noting that $\dim_{H}\tilde{\Lambda}_m\geq \dim_{H}\Lambda$, we complete the second part of Theorem \ref{theorem-main}.

\section{Proof of Theorem \ref{thm-uni-parabolic}}}

In this section, we will prove Theorem \ref{thm-uni-parabolic}.
Recall that $g(x)=\log |T'(x)|$ for any $x\in \Lambda$.
By Theorem \ref{theorem-main}, we only need to prove that $\dim_{H}\Lambda_{\delta_p}=\dim_{H}\tilde{\Lambda}_{\delta_p}$ with the assumption
\begin{equation}\label{eq-assumption}
\dim^{hyp}_{H}\Lambda<\dim_{H}\Lambda.
\end{equation}


Let  $$\Lambda_{*}:=\{x\in \Lambda: \liminf\limits_{n\rightarrow\infty} A_n g(x)>0\}$$ and $$\Lambda^{*}:=\{x\in \Lambda: \limsup A_ng(x)>0\}.$$
It is proved in \cite{JJOP2010} that
\begin{equation}\label{eq-JJOP}
\dim^{hyp}_{H}\Lambda=\dim_{H}\Lambda_{*}.
\end{equation}
By \eqref{eq-JJOP} and \eqref{eq-assumption}, we know that
\begin{equation*}
\dim_{H}\Lambda=\dim_{H}\Lambda\backslash\Lambda_{*}.
\end{equation*}

 We write $\Lambda^*=\bigcup_{k=1}^{\infty}\Lambda_{k}^{*}$, where $\Lambda_{k}^{*}=\{x\in \Lambda^{*}: \limsup\limits_{n\rightarrow\infty} A_ng(x)>\frac{1}{k}\}$ for any $k\in \mathbb{N}$. For any $n\in \mathbb{N}$, we denote
$$C_{n}:=\left\{\mu\in \mathcal{M}(\Lambda, T): d(\mu, \delta_{p})\geq \frac{1}{n}\right\}.$$

Since $T$ has only one fixed point, we know that
$\lambda(\mu, T)>0$ for any $\mu\in C_n$. Noting that $C_n$ is compact, then for any positive real number $\rho_n<\frac{1}{n}$, there exists finitely many invariant measures $\{\mu_i\}_{i=1}^{l_n}
\subset C_n$ for some $l_n\in \mathbb{N}$ such that $C_n\subset \bigcup_{i=1}^{l_n} int B_{d}(\mu_i, \rho_n)$ and $\delta_{p}\notin \bigcup_{i=1}^{l_n} int B_{d}(\mu_i, \rho_n)$,
where $int B_{d}(\mu_i, \rho_n)$ is the interior of the closed ball $B_{d}(\mu_i, \rho_n)=\{\mu: d(\mu, \mu_i)\leq \rho_n\}$.

Let $$B_n:=\{x\in\Lambda: Asym(\{(A_m)_{*}\delta_x\}_{m=1}^{\infty})\cap C_n\neq \emptyset\}$$
and
$$
B_{n}^{i}:=\{x\in\Lambda: Asym(\{(A_m)_{*}\delta_x\}_{m=1}^{\infty})\cap B_{d}(\mu_i, \rho_n)\neq \emptyset\}
$$
for $1\leq i\leq l_n$.
It is easy to see that
\begin{equation}\label{eq-union}
\Lambda^{*}=\bigcup_{n=1}^{\infty}B_n=\Lambda\backslash\Lambda_{\delta_p}.
\end{equation}
By Proposition \ref{prop-key-2}, we have
$$
\dim_{H}B_{n}^{i}\leq \sup\left\{\frac{h(\mu, T)}{\lambda(\mu, T)}: \mu\in B_d(\mu_i, \rho_n)\right\}\leq \dim_{H}^{hyp}\Lambda.
$$
It follows that
$$\dim_{H}B_n\leq \max_{1\leq i\leq l_n}\dim_{H}B_{n}^{i}\leq \dim_{H}^{hyp}\Lambda.$$
Now, we get $$\dim_{H}\Lambda^{*}\leq \dim_{H}^{hyp}\Lambda.$$
Together with \eqref{eq-JJOP}, we have
$$\dim_{H}\Lambda^{*}=\dim_{H}^{hyp}\Lambda.$$
This implies that $\dim_{H}\Lambda_{\delta_p}=\dim_{H}\Lambda$. Since $$\dim_{H}\tilde{\Lambda}_{\delta_p}\geq \dim_{H}\Lambda_{\delta_p},$$
we get
\begin{equation}
\dim_{H}\tilde{\Lambda}_{\delta_p}=\dim_{H}\Lambda_{\delta_p}.
\end{equation}

\section{Acknowledgement}
We would like to thank Professor Shen Weixiao for drawing our attention to recovering the hyperbolicity in the non-uniformly hyperbolic interval maps along the subsequence. The research was partially supported  by National Key R\&D Program of China (2020YFA0713300) and NSFC of China (No.11771233, No.11901311).


\begin{thebibliography}{10}

\bibitem{Chung2010}
Yong~Moo Chung.
\newblock Birkhoff spectra for one-dimensional maps with some hyperbolicity.
\newblock {\em Stoch. Dyn.}, 10(1):53--75, 2010.

\bibitem{Climenhaga}
Vaughn Climenhaga.
\newblock The thermodynamic approach to multifractal analysis.
\newblock {\em Ergodic Theory Dynam. Systems}, 34(5):1409--1450, 2014.

\bibitem{Climenhaga-2013}
Vaughn Climenhaga.
\newblock Topological pressure of simultaneous level sets.
\newblock{\em Nonlinearity},  26 (1):241--268, 2013.



\bibitem{Dobbs-2006}
Neil Dobbs.
\newblock Hyperbolic dimension for interval maps.
\newblock {\em Nonlinearity}, 19(12):2877--2894, 2006.

\bibitem{Dobbs-2015}
Neil Dobbs.
\newblock Pesin theory and equilibrium measures on the interval.
\newblock {\em Fund. Math.}, 231(1):1--17, 2015.

\bibitem{FLP}
Aihua Fan, Lingmin Liao, and Jacques Peyri\`ere.
\newblock Generic points in systems of specification and {B}anach valued
  {B}irkhoff ergodic average.
\newblock {\em Discrete Contin. Dyn. Syst.}, 21(4):1103--1128, 2008.

\bibitem{FLW2002}
De-Jun Feng, Ka-Sing Lau, and Jun Wu.
\newblock Ergodic limits on the conformal repellers.
\newblock {\em Adv. Math.}, 169(1):58--91, 2002.

\bibitem{GP1997}
Dimitrios Gatzouras and Yuval Peres.
\newblock Invariant measures of full dimension for some expanding maps.
\newblock {\em Ergodic Theory Dynam. Systems}, 17(1):147--167, 1997.

\bibitem{BPM2014}
Katrin Gelfert, Feliks Przytycki, and Micha\l Rams.
\newblock Lyapunov spectrum for multimodal maps.
\newblock {\em Ergodic Theory Dynam. Systems}, 36(5):1441--1493, 2016.

\bibitem{GR20092}
Katrin Gelfert and Micha\l Rams.
\newblock Geometry of limit sets for expansive {M}arkov systems.
\newblock {\em Trans. Amer. Math. Soc.}, 361(4):2001--2020, 2009.

\bibitem{GR20091}
Katrin Gelfert and Micha\l Rams.
\newblock The {L}yapunov spectrum of some parabolic systems.
\newblock {\em Ergodic Theory Dynam. Systems}, 29(3):919--940, 2009.

\bibitem{Lommi-Todd}
Godofredo Iommi and Mike Todd.
\newblock Dimension theory for multimodal maps.
\newblock {\em Ann. Henri Poincar\'{e}}, 12(3):591--620, 2011.

\bibitem{JJOP2010}
Anders Johansson, Thomas~M. Jordan, Anders \"{O}berg, and Mark Pollicott.
\newblock Multifractal analysis of non-uniformly hyperbolic systems.
\newblock {\em Israel J. Math.}, 177:125--144, 2010.

 \bibitem{JT2020}
Jaerisch, Johannes, and Hiroki Takahasi.
\newblock Mixed multifractal spectra of Birkhoff averages for non-uniformly expanding one-dimensional Markov maps with countably many branches.
\newblock {arXiv preprint.}, Arxiv: 2004.04347, 2020.

\bibitem{Ma-Yao}
Guan-Zhong Ma and Xiao Yao.
\newblock Higher dimensional multifractal analysis of non-uniformly hyperbolic
  systems.
\newblock {\em J. Math. Anal. Appl.}, 421(1):669--684, 2015.

\bibitem{Olsen-2003}
Lars~Olsen.
\newblock Multifractal analysis of divergence points of deformed measure
  theoretical {B}irkhoff averages.
\newblock {\em J. Math. Pures Appl. (9)}, 82(12):1591--1649, 2003.

\bibitem{Olsen-2007}
Lars~Olsen and Steffan~Winter.
\newblock Multifractal analysis of divergence points of deformed measure
  theoretical {B}irkhoff averages. {II}. {N}on-linearity, divergence points and
  {B}anach space valued spectra.
\newblock {\em Bull. Sci. Math.}, 131(6):518--558, 2007.

\bibitem{PW1997A}
Yakov Pesin and Howard Weiss.
\newblock A multifractal analysis of equilibrium measures for conformal
  expanding maps and {M}oran-like geometric constructions.
\newblock {\em J. Statist. Phys.}, 86(1-2):233--275, 1997.

\bibitem{PS2005}
Charles Pfister and Wayne   Sullivan.
\newblock Large deviations estimates for dynamical systems without the
  specification property. {A}pplications to the {$\beta$}-shifts.
\newblock {\em Nonlinearity}, 18(1):237--261, 2005.

\bibitem{PS2007}
Charles Pfister and Wayne   Sullivan.
\newblock On the topological entropy of saturated sets.
\newblock {\em Ergodic Theory Dynam. Systems}, 27(3):929--956, 2007.

\bibitem{PW1999}
Mark Pollicott and Howard Weiss.
\newblock Multifractal analysis of {L}yapunov exponent for continued fraction
  and {M}anneville-{P}omeau transformations and applications to {D}iophantine
  approximation.
\newblock {\em Comm. Math. Phys.}, 207(1):145--171, 1999.

\bibitem{Takens-Verbitskiy}
Floris Takens and Evgeny Verbitskiy.
\newblock On the variational principle for the topological entropy of certain
  non-compact sets.
\newblock {\em Ergodic Theory Dynam. Systems}, 23(1):317--348, 2003.

\bibitem{Tian-Varandas}
Xueting Tian and Paulo Varandas.
\newblock Topological entropy of level sets of empirical measures for
  non-uniformly expanding maps.
\newblock {\em Discrete Contin. Dyn. Syst.}, 37(10):5407--5431, 2017.

\bibitem{Urbanski1996}
Mariusz Urba\'{n}ski.
\newblock Parabolic {C}antor sets.
\newblock {\em Fund. Math.}, 151(3):241--277, 1996.

\bibitem{zheng-chen-zhou}
Zheng Yin, Ercai Chen, and Xiaoyao Zhou.
\newblock Multifractal analysis of ergodic averages in some non-uniformly
  hyperbolic systems.
\newblock {\em Ergodic Theory Dynam. Systems}, 36(7):2334--2350, 2016.



\end{thebibliography}

\end{document}